%%%%%%%%%%%%%%%%%%%%%%%%%%%%%%%%%%%%%%%%%%%%%%%%%%%%%%%%%%%%%%%%%%%%%%
% LaTeX file for the                                                 %
%                                                                    %
% Minimum Rainbow $H$-decompositions of graphs: edge-critical case.          %
%                                                                    %
% by L. \"Ozkahya and Y. Person        	 	                     %
%%%%%%%%%%%%%%%%%%%%%%%%%%%%%%%%%%%%%%%%%%%%%%%%%%%%%%%%%%%%%%%%%%%%%%
%    
%%%%%%%%%%%%%%%%%%%%%%%%%%%%%%%%%%%%%%%%%%%%%%%%%%%%%%%%%%%%%%%%%%%%%%
\documentclass[reqno]{amsart}     
\RequirePackage{amsmath}
\RequirePackage{amsthm}
\RequirePackage{amssymb}
\RequirePackage{latexsym}
\RequirePackage[mathscr]{eucal}
\RequirePackage{verbatim}
\RequirePackage{enumerate}
\RequirePackage{mathrsfs}
\RequirePackage[colorlinks]{hyperref}
\RequirePackage{url}
   \usepackage{graphics}

\RequirePackage{amsmath}
\RequirePackage{amsthm}
\RequirePackage{amssymb}
\RequirePackage{latexsym}
\RequirePackage[mathscr]{eucal}
\RequirePackage{verbatim}
\RequirePackage{xspace}
\RequirePackage{mathrsfs}
\RequirePackage[all,cmtip]{xy}
\RequirePackage{hyperref}
\RequirePackage{amssymb}
\RequirePackage[english,algoruled,vlined,linesnumbered]{algorithm2e}

\usepackage{color}  

\newtheorem{theorem}{Theorem}

\newtheorem{lemma}[theorem]{Lemma}

\newtheorem{proposition}[theorem]{Proposition}

\newtheorem{claim}[theorem]{Claim}

\newtheorem{conj}[theorem]{Conjecture}

\let\eps=\varepsilon
\let\theta=\vartheta
\let\rho=\varrho
\let\sigma=\varsigma

\let\polishlcross=\l
\def\l{\ifmmode\ell\else\polishlcross\fi}

\def\dcup{\dot\cup}

\def\qand{\quad\text{and}\quad}

\newcommand{\supp}{\mathrm{supp}}

\newcommand{\cK}{\mathcal{K}}

\def\Nat{\mathbb N}

\def\pr{\phi_{\chi}^R}
\def\prn{\phi^R}

\def\nr{N_{\chi}^R}

\def\ccH{\mathcal{H}}

\newcommand{\ex}{\mathrm{ex}}

\newcommand{\edk}[1]{N(n,#1,K_r)}
\newcommand{\edt}[2]{N(n,#1,K_{#2})}

\newcommand{\PP}{\mathbb{P}}
\newcommand{\EE}{\mathbb{E}}

\author[L. \"Ozkahya]{Lale \"Ozkahya}
\address{Hacettepe University, Department of Computer Engineering, 
Beytepe 06800 Ankara, Turkey}
\email{ozkahya@hacettepe.edu.tr}

\author[Y.~Person]{Yury Person}
\address{Goethe-Universit\"at, Institut f\"ur Mathematik,
  Robert-Mayer-Str. 10, 60325 Frankfurt am Main, Germany}
\email{person@math.uni-frankfurt.de}
\thanks{YP was supported by DFG grant PE 2299/1-1.}

\begin{document}
\title[Minimum rainbow $H$-decompositions of graphs]%: edge-critical case]
{Minimum rainbow $H$-decompositions of graphs}%: edge-critical case}

\date{\today}
\begin{abstract}
Given graphs  $G$ and $H$, we consider the problem of decomposing a properly edge-colored graph $G$ into few parts consisting of rainbow copies of $H$ and single edges. We establish a close relation to the previously studied problem of minimum $H$-decompositions, where an edge coloring does not matter and one is merely interested in decomposing graphs into copies of $H$ and single edges.
\end{abstract}

\maketitle

\section{Introduction and new results}\label{sec:intro}
For two graphs $G$ and $H$ and a proper coloring $\chi$ of $G$, 
a \emph{rainbow $H$-decomposition} of $G$ is a partition  
of the edges of $G$ into $G_1$,\ldots, $G_t$ 
such that every $G_i$ is either a single edge 
or is rainbow-colored and isomorphic to $H$, where a rainbow coloring of $H$ assigns distinct colors to all edges of $H$. 
Rainbow $H$-decompositions present a variation on a well-studied topic of $H$-decompositions. Before we state our main results, we present the history of this general problem.

\subsection{Previous work on $H$-decompositions}

For two graphs $G$ and $H$, 
an \emph{$H$-decomposition} of $G$ is a partition  %\textcolor{red}
of the edges of $G$ into $G_1$,\ldots, $G_t$ 
such that every $G_i$ is either a single edge 
or is isomorphic to $H$. 
An $H$-decomposition of $G$ with smallest possible $t$ is called 
\emph{minimum} and $\phi(G, H)=t$ denotes its cardinality. 
Let $N(G, H)$ denote the maximum number of edge-disjoint copies of $H$ in $G$, then 
the following relation $\phi(G, H)=e(G)-(e(H)-1)N(G, H)$ clearly holds.

We write $\phi(n,H):=\max_{G\in \mathcal{G}_n} \phi(G, H)$ for a general function, 
where $\mathcal{G}_n$ denotes the family of all graphs on $n$ vertices.
 This function was studied first by Erd{\H o}s, Goodman and P\'osa~\cite{EGP66}, 
who were motivated by the problem of representing graphs by set intersections. 
They showed that $\phi (n, K_3)=\ex(n,K_3)$, 
where $\ex(n,H)$ denotes the maximum size of a graph on $n$ vertices, 
that does not contain $H$ as a subgraph. 
Moreover, these authors  proved in~\cite{EGP66}  that the only graph that maximizes this function 
is the complete balanced bipartite graph. 
Consequently, they conjectured that $\phi (n, K_r)=\ex(n,K_r)$ and 
the only optimal graph is the Tur\'an graph $T_{r-1}(n)$, 
the complete balanced $(r-1)$-partite graph on $n$ vertices, 
where the sizes of the partite sets differ from each other by at most one. 
 Bollob\'as~\cite{Bo76} verified this conjecture by showing that $\phi (n,K_r)=\ex(n,K_r)$ for all  $n\ge r\ge 3$.

Pikhurko and Sousa~\cite{PS07} proved that for any fixed graph $H$ 
with chromatic number $r\ge 3$, $\phi (n, H)=\ex(n,H)+o(n^2)$ and they 
 made the following conjecture.
\begin{conj}\label{conj1}
 For any graph $H$ with chromatic number at least 3, there is an  $n_0=n_0(H)$ such that $\phi (n, H)=\ex(n,H)$ for all $n\ge n_0$.
\end{conj}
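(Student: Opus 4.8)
\medskip\noindent\textit{Proof proposal for Conjecture~\ref{conj1}.}
Write $r:=\chi(H)\ge 3$, so that $\ex(n,H)=e(T_{r-1}(n))+o(n^2)$ by Erd\H os--Stone, and note $e(H)\ge 3$ (a graph of chromatic number at least $3$ contains an odd cycle; equality holds only for $H=K_3$, a case already settled by Erd\H os--Goodman--P\'osa and Bollob\'as). The lower bound $\phi(n,H)\ge\ex(n,H)$ is immediate: an extremal $H$-free graph $G^\ast$ has $N(G^\ast,H)=0$ and hence $\phi(G^\ast,H)=e(G^\ast)=\ex(n,H)$. So, using the identity $\phi(G,H)=e(G)-(e(H)-1)N(G,H)$, the task is to prove that for all $n$-vertex $G$ with $n$ large,
\begin{equation}\label{eq:target}
  N(G,H)\ \ge\ \frac{q}{\,e(H)-1\,},\qquad q:=e(G)-\ex(n,H),
\end{equation}
which is vacuous unless $q\ge 1$. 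It helps to keep in mind the equivalent form: if $\{H_1,\dots,H_s\}$ is a \emph{maximum} family of edge-disjoint copies of $H$ in $G$, then the remainder $F:=G-\bigcup_iE(H_i)$ is $H$-free, so $e(F)\le\ex(n,H)$ and $\phi(G,H)=s+e(F)$; thus \eqref{eq:target} says that the remainder of a maximum $H$-packing must lie at least $s=N(G,H)$ edges below the Tur\'an bound.

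The plan for \eqref{eq:target} is to decompose $G$ into a ``Tur\'an-like skeleton'' plus a small set of surplus edges and then build the required copies on the surplus edges. Concretely, using Szemer\'edi's regularity lemma and the Erd\H os--Simonovits stability theorem (and Simonovits' exact Tur\'an result for edge-critical $H$), I would locate an $H$-free subgraph $S\subseteq G$ with $e(S)\ge\ex(n,H)-o(n^2)$ such that $S$ is ``rich'': in the edge-critical case $S$ is, after cleaning, the Tur\'an graph $T_{r-1}(n)$ on a near-balanced partition $V_1\cup\dots\cup V_{r-1}$, and every edge of $G\setminus S$ lying inside a part extends, within $G$, to a copy of $H$ that uses that edge together with exactly $e(H)-1$ cross-edges of $S$ and only $O_H(1)$ vertices --- one embeds $H$ so that a critical edge $e^\ast$ (with $\chi(H-e^\ast)=r-1$) maps to the surplus edge, one colour class of $H$ lands as a singleton inside the part of that surplus edge with its remaining vertices also inside that part, and the other $r-2$ classes go to $r-2$ of the remaining parts. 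Since $S$ is $H$-free and $S\subseteq G$, we have $q\le e(G)-e(S)\le q+o(n^2)$, i.e.\ there are between $q$ and $q+o(n^2)$ surplus edges.

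It then remains to \emph{route}: to choose at least $q/(e(H)-1)$ of these extension-copies that are pairwise edge-disjoint. There is room in principle --- each copy consumes one surplus edge and $e(H)-1$ of the $e(S)=\Theta(n^2)$ skeleton edges, and $q/(e(H)-1)$ copies consume only $q\le e(S)$ of the latter (here one uses $r\ge 3$: $q\approx\frac1{r-1}\binom n2$ is at most $e(S)\approx(1-\frac1{r-1})\binom n2$) --- but the extensions attached to different surplus edges compete for skeleton edges, so a naive greedy can stall. I would run a R\"odl-nibble / random-greedy selection (when $q=\Theta(n^2)$) or a hands-on congestion argument (when $q$ is smaller), in either case being careful to \emph{spread} the surplus edges across the chosen copies, using each at most once; this spreading is exactly why $N(G,H)$ reaches $q/(e(H)-1)$ and is not bottlenecked at a handful of copies (cf.\ $G=T_2(n)+(\text{one triangle inside a part})$ for $H=K_3$, where the three edges of that triangle must be parcelled out to three different copies).

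The hardest part is exactness, and it lies in two places. First, all the structural steps only control $q$ up to an $o(n^2)$ error, which is worthless once $q=O(1)$; so one must separately analyse, by hand, all $G$ within a bounded number of edges of the extremal configuration, checking for instance that attaching a bounded gadget to $T_{r-1}(n)$ never beats $\ex(n,H)$ --- and this genuinely requires choosing the extending copies cleverly, not greedily. Second --- and this is why the conjecture has so far been verified only for edge-critical $H$ and for complete multipartite $H$ --- when $H$ is not edge-critical one has $\ex(n,H)>e(T_{r-1}(n))$, the extremal $H$-free graphs form a richer family, the skeleton $S$ is no longer a subgraph of $T_{r-1}(n)$, the clean ``one surplus edge per copy'' picture fails (a surplus edge may need two or more companions from $S$, which weakens the ratio), and pinning down $\ex(n,H)$ on the nose becomes substantially more delicate. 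Everything else --- the asymptotic reduction, supersaturation, and the regularity and stability input --- is essentially available from the Pikhurko--Sousa machinery.
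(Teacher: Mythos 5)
The statement you are attempting is Conjecture~\ref{conj1}, which the paper does not prove: it is the Pikhurko--Sousa conjecture, quoted as an open problem, and the paper only records the special cases in which it has been verified (clique extensions, $C_5$ and $C_7$, and edge-critical $H$ from~\cite{LO-YP}); the paper's own theorems concern the rainbow variant $\prn(n,H)$ and the clique case $K_r$. Your proposal is therefore not being measured against any proof in the paper, and on its own terms it is not a proof either --- it is an accurate survey of the known strategy (stability plus regularity plus an edge-disjoint packing of extension copies) together with an explicit admission of exactly the points where that strategy is not known to close.

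Two of those points are fatal as written. First, the routing step: you propose a R\"odl-nibble or random-greedy selection of $\lceil q/(e(H)-1)\rceil$ edge-disjoint extension copies, but nibble-type arguments inherently lose an $o(n^2)$ (or constant-fraction) number of copies, which is worthless for the exact statement precisely in the regime $q=O(1)$ where the conjecture actually lives after the stability reduction; the deterministic iterative embedding that replaces it (the analogue of Claim~9 in~\cite{LO-YP}, which needs the minimum-degree cleaning of Claim~7 and the near-balancedness of Claim~8) is the real content of the edge-critical case, and you do not supply it. Second, and decisively, for non-edge-critical $H$ you concede yourself that the skeleton $S$ need not sit inside a Tur\'an graph, that a surplus edge may fail to extend to a copy of $H$ meeting $S$ in exactly $e(H)-1$ edges, and that the ratio in your target inequality then degrades --- this is precisely the obstruction that has kept the conjecture open, and your proposal offers no mechanism for overcoming it. So the proposal correctly maps the landscape but establishes nothing beyond the already-cited special cases; the conjecture remains unproved in general.
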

This conjecture has been verified by Sousa for 
clique extensions of order $r\ge 4$ ($n\ge r$)~\cite{sousa2010+}, 
the cycles of length $5$ ($n\ge 6$) 
and $7$ ($n\ge 10$)~\cite{sousa2005,sousa2011}.
In a previous work~\cite{LO-YP} we verified Conjecture~\ref{conj1} for \emph{edge-critical} graphs, 
where a graph $H$ is  edge-critical if there exists an edge $e\in E(H)$ such that $\chi(H)>\chi(H-e)$. 
Later Allen, B\"ottcher and Person~\cite{ABP14} obtained general upper 
bounds for all graphs $H$ that improve the error term in the result 
of Pikhurko and Sousa~\cite{PS07} and extend the result on the edge-critical case.

Recently, Liu and Sousa~\cite{LS14} studied the following colored variant of the $H$-decomposition problem. 
 Let $\phi_k(G,H)$ be  the smallest number $t$ such that any graph $G$ of order $n$ and 
any coloring of its edges with $k$ colors admits a monochromatic $H$-decomposition 
with at most $t$ parts, where $H$ is a complete graph $K_r$. 
Later, Liu, Pikhurko and Sousa~\cite{LPS15} generalized this by investigating 
$\phi_k(G,\mathcal{H})$, which is the smallest number $t$ such that any graph $G$ 
of order $n$ and any coloring of its edges with $k$ colors admits a 
monochromatic $\mathcal{H}=\{H_1,\dots, H_k\}$-decomposition 
such that each part
is either a single edge or forms a monochromatic copy of $H_i$ in color $i$,
for some $1\le i\le k$.  
Extending the results of Liu and Sousa~\cite{LS14}, 
they solve this problem when each graph in $\mathcal{H}$ is
a clique and $n\ge n_0(\mathcal{H})$ is sufficiently large.

\subsection{New results}

 We call a rainbow $H$-decomposition of $G$ under the coloring $\chi$ 
with the smallest possible $t$
\emph{minimum} and $\pr(G,H)=t$ denotes its cardinality. 
It is not difficult to see that 
$\pr(G, H)=e(G)-(e(H)-1)\nr(G,H)$, where 
$\nr(G, H)$ denotes the maximum number of edge-disjoint rainbow copies of $H$ under this   
proper coloring $\chi$ of $G$.

In this paper, we study the function
\[
 \prn(n,H):=\max_{G\in \mathcal{G}_n} \max_{\chi}\pr(G, H),
\]
where we maximize over all graphs $G$ from  $\mathcal{G}_n$ and  over all proper colorings 
 $\chi$  of $G$. We will refer to decompositions that attain $\prn(n,H)$
  as \emph{minimum rainbow $H$-decompositions of graphs}.

Observe that $ \pr(G, H) \geq \phi(G,H)$ for any graph $G$ and any proper edge coloring $\chi$ of $G$, 
otherwise it means that there are more than $N(G, H)$ edge-disjoint rainbow 
copies of $H$ in  $G$ under $\chi$, which is a contradiction.  
Therefore, we have 
%\begin{equation}\label{uppbd}
$\prn(n,H)\geq \phi(n, H)$.
%\end{equation}
On the other hand, $\max_{\chi}\pr(G, H)\leq e(G)$, where 
equality is achieved when there is no copy of $H$ in $G$, otherwise 
one can always color $G$ properly so that a particular copy of $H$ 
is rainbow. %We will refer to such decompositions as \emph{minimum rainbow $H$-decompositions of graphs}.

We prove the following result for any clique $K_r$, $r\ge 3$. % for all edge-critical $H$.

\begin{theorem}\label{thm:main-rainbow}
For any $r\ge 3$  
there is an $n_0$ such that any graph $G$ on $n\ge n_0$ vertices with some proper edge coloring  $\chi$ that satisfies $\pr(G,K_r)\ge \ex(n,K_r)$ must in fact be isomorphic to the Tur\'an graph $T_{r-1}(n)$.

In particular, $\prn(n,K_r)=\ex(n,K_r)$ for all $n\ge n_0$, and  the only graph attaining $\prn(n,K_r)$ is the Tur\'an graph $T_{r-1}(n)$.
\end{theorem}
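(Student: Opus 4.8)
The plan is to reduce the rainbow problem to the uncolored one and then invoke the known stability/exactness results for $\phi(n,K_r)$. The starting point is the chain of inequalities already recorded in the introduction:
\[
\ex(n,K_r)\le \pr(G,K_r)=e(G)-(r-1)\nr(G,K_r)\le e(G)-(r-1)N(G,K_r)=\phi(G,K_r).
\]
Since Bollob\'as's theorem gives $\phi(n,K_r)=\ex(n,K_r)$, the hypothesis $\pr(G,K_r)\ge\ex(n,K_r)$ forces $\phi(G,K_r)=\ex(n,K_r)$, i.e. $G$ is an extremal graph for the uncolored decomposition problem. Now I would like to say: the extremal graphs for $\phi(n,K_r)$ are exactly $T_{r-1}(n)$ for $n$ large, hence $G\cong T_{r-1}(n)$. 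The catch is that Bollob\'as's result is stated only for the value $\phi(n,K_r)=\ex(n,K_r)$; one needs the \emph{uniqueness} statement "$T_{r-1}(n)$ is the only optimal graph." For $r=3$ this is the Erd\H os--Goodman--P\'osa theorem, and for general $r$ it is part of Bollob\'as's theorem (or follows from the Pikhurko--Sousa stability bound $\phi(n,K_r)=\ex(n,K_r)+o(n^2)$ together with a short argument). So the first main step is to establish, or cite, the uniqueness of the extremal graph for $\phi(n,K_r)$.

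However, the equality $\pr(G,K_r)=\phi(G,K_r)$ only holds when $\nr(G,K_r)=N(G,K_r)$, i.e. when $G$ admits a maximum packing of $K_r$-copies \emph{all of which are rainbow under $\chi$}. For a general proper coloring $\chi$ this is false, so the inequality $\pr(G,K_r)\ge\phi(G,K_r)$ is strict in general and the reduction above is not quite immediate: from $\pr(G,K_r)\ge\ex(n,K_r)$ and $\phi(G,K_r)\le\ex(n,K_r)$ we only get $\pr(G,K_r)=\phi(G,K_r)=\ex(n,K_r)$, which does pin $G$ down provided we also know $\phi(G,K_r)\le\ex(n,K_r)$ with equality iff $G\cong T_{r-1}(n)$ — and that is exactly the uniqueness statement. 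So in fact the logic closes cleanly: $\ex(n,K_r)\le\pr(G,K_r)\le\phi(G,K_r)\le\phi(n,K_r)=\ex(n,K_r)$ pinches everything to equality, and $\phi(G,K_r)=\ex(n,K_r)$ together with uniqueness gives $G\cong T_{r-1}(n)$. The second main step is simply to verify that $T_{r-1}(n)$ itself attains $\prn(n,K_r)=\ex(n,K_r)$: since $T_{r-1}(n)$ is $K_r$-free it contains no copy of $K_r$ at all, rainbow or not, so $\pr(T_{r-1}(n),K_r)=e(T_{r-1}(n))=\ex(n,K_r)$ for every proper coloring, which also shows the maximum $\prn(n,K_r)$ is at least $\ex(n,K_r)$ and hence equals it.

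The main obstacle is therefore not the packing argument — that is essentially the trivial observation already in the excerpt — but making sure the \emph{uniqueness} of the extremal graph for $\phi(n,K_r)$ is available in the exact (not just asymptotic) form for all $r\ge 3$ and all large $n$. The cleanest route is: Bollob\'as~\cite{Bo76} proves $\phi(n,K_r)=\ex(n,K_r)$, and an inspection of his proof (or the stability method: if $G$ is far from $T_{r-1}(n)$ then $e(G)\le\ex(n,K_r)-\Omega(n)$ while $N(G,K_r)\le e(G)/\binom r2$, forcing $\phi(G,K_r)<\ex(n,K_r)$ for large $n$) yields that $T_{r-1}(n)$ is the unique maximizer. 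Once that is in hand, the theorem follows in a few lines, and the final sentence of the statement — $\prn(n,K_r)=\ex(n,K_r)$ with $T_{r-1}(n)$ the unique optimum — is an immediate consequence, since $\prn(n,K_r)=\max_G\max_\chi\pr(G,K_r)\le\max_G\phi(G,K_r)=\ex(n,K_r)$ and equality is attained (only) at $T_{r-1}(n)$.
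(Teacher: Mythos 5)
There is a fundamental gap: your key inequality points the wrong way. Since every family of edge-disjoint \emph{rainbow} copies of $K_r$ is in particular a family of edge-disjoint copies, we have $\nr(G,K_r)\le N(G,K_r)$, and therefore
\[
\pr(G,K_r)=e(G)-\left(\tbinom{r}{2}-1\right)\nr(G,K_r)\;\ge\; e(G)-\left(\tbinom{r}{2}-1\right)N(G,K_r)=\phi(G,K_r),
\]
not $\le$ as in your displayed chain (this is exactly the observation $\pr(G,H)\ge\phi(G,H)$ recorded in the introduction; note also that the coefficient is $e(K_r)-1=\binom{r}{2}-1$, not $r-1$). Consequently the pinching $\ex(n,K_r)\le\pr(G,K_r)\le\phi(G,K_r)\le\phi(n,K_r)$ is invalid at its second step, and the hypothesis $\pr(G,K_r)\ge\ex(n,K_r)$ gives you no lower bound on $\phi(G,K_r)$ whatsoever: a priori a proper coloring could render most copies of $K_r$ non-rainbow, making $\nr$ much smaller than $N$ and hence $\pr$ much larger than $\phi$, so that a graph far from $T_{r-1}(n)$ (even with $e(G)>\ex(n,K_r)$) could still satisfy $\pr(G,K_r)\ge\ex(n,K_r)$. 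You half-notice this in your second paragraph (``the inequality $\pr\ge\phi$ is strict in general''), but then from ``$\pr\ge\ex$ and $\phi\le\ex$'' you conclude $\pr=\phi=\ex$, which is a non sequitur: those two facts are consistent with $\phi(G,K_r)$ being far below $\ex(n,K_r)$.

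Ruling out this scenario is precisely the content of the paper's argument, and it is not ``the trivial observation already in the excerpt.'' The paper first proves (Lemma~\ref{lem:almost_edge_rainbow}) that for \emph{any} proper coloring $N(G,K_r)\le\nr(G,K_r)+cn^2$, via the regularity lemma, a rainbow counting lemma, Kim--Vu concentration for randomly split regular pairs, and the Pippenger--Spencer matching theorem; this yields the rainbow stability theorem (Theorem~\ref{thm:stab_phi_rainbow}) by reduction to Proposition~\ref{thm:stab_phi_cliques}. The exact result then still requires an extra embedding step: after stability one must iteratively find $\lfloor m_2/(\binom{r}{2}-1)\rfloor+1$ \emph{rainbow} copies of $K_r$, which is done by locating large complete multipartite subgraphs and invoking a lemma of Keevash, Mubayi, Sudakov and Verstra\"ete guaranteeing a rainbow $K_r$ therein. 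None of this machinery can be bypassed by citing uniqueness of the extremal graph for the uncolored function $\phi(n,K_r)$, which is the only substantive step your proposal supplies. (Your verification that $T_{r-1}(n)$ attains $\pr=\ex(n,K_r)$ and hence $\prn(n,K_r)\ge\ex(n,K_r)$ is correct but is the easy direction.)
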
  

We also obtain generalizations of the result of Pikhurko and Sousa~\cite{PS07} 
on $\phi (n, H)=\ex(n,H)+o(n^2)$ (for non-bipartite $H$) and on our result from~\cite{LO-YP} about 
$\phi (n, H)=\ex(n,H)$ for edge-critical graphs $H$. Since we provide only very rough sketches of these generalizations, we postpone their discussion to the concluding remarks section, Section~\ref{sec:remarks}. 
\begin{comment}
With essentially the same techniques, Theorem~\ref{thm:main-rainbow} can be generalized to any 
edge-critical graph of chromatic number at least $3$. Moreover, one can generalize the result 
of Pikhurko and Sousa~\cite{PS07} 
on $\phi (n, H)$ mentioned above as follows.

%We prove the following result for any non-bipartite graph $H$. % for all edge-critical $H$.  
\begin{theorem}\label{thm:main-rb-gen}
Let $H$ be any graph of chromatic number at least three. Then we have
\[
\pr(n,H)=\ex(n,H)+o(n^2).
\]
\end{theorem}
%Observe that this theorem generalizes the result of Pikhurko and Sousa~\cite{PS07}  on $\phi (n, H)$ mentioned above. Moreover, in the special case that $H$ is edge-critical, we show the following generalization of our result in~\cite{LO-YP}.
\begin{theorem}\label{thm:main-rainbow}
For any edge-critical graph $H$ with chromatic number at least 3, 
there is an $n_0=n_0(H)$ such that $\pr(n,H)=\ex(n,H)$ for all $n\ge n_0$. 
Moreover, the only graph attaining $\pr(n,H)$ is the Tur\'an graph $T_{\chi(H)-1}(n)$.
\end{theorem}  
\end{comment}

%This bound is implied also by~\eqref{uppbd} and Theorem~\ref{thm:main}.  

Our proofs will combine stability approach with probabilistic techniques.
The paper is organized as follows. In the section below, Section~\ref{sec:tools}, we collect the various results that we are going to use. 
In Section~\ref{sec:stability}, we prove a new stability result about the function $\phi(G,K_r)$ and building on that, we show various stability results about the function $\pr(G,K_r)$. Our proof of the rainbow stability for $\pr(G,K_r)$, Theorem~\ref{thm:stab_phi_rainbow}, is a main contribution of this paper.  
 In Section~~\ref{sec:exact} we provide the (sketch of the) proof of Theorem~\ref{thm:main-rainbow}. 
Finally, we explain in Section~\ref{sec:remarks} how the
 exact version for edge-critical $H$  and an approximate version for nonbipartite $H$ of the function $\prn(n,H)$ follow.

\subsection{Notation}
Throughout the sections, we omit floor and ceiling notations, since they do not 
affect our calculations.   
We use standard notations from graph theory. 
Thus, for $t\in\Nat$ we denote by $[t]$ the set $\{1,\ldots,t\}$. 
For a given graph $G=(V,E)$ and for a subset $U\subseteq V$ 
we denote by $E_G(U)=E\cap\binom{U}{2}$ and $G[U]=G(U,E_G(U))$. 
We set $e_G(U)=|E_G(U)|$, and for a vertex $v\in V$ 
we write $\deg_{G,U}(v)=|\{u\in U\colon \{v,u\}\in E(G)\}|$, 
i.e., we are only counting the neighbors of $v$ in $U$. 
Similarly, for two disjoint subsets $U,W\subseteq V$ 
we set $E_G(U,W)=\{\{u,w\}\in E(G)\colon u\in U, w\in W\}$, 
$G[U,W]=G(U\cup W, E_G(U,W))$ and 
$e_G(U,W)=|E_G(U,W)|$. 
We will sometimes omit $G$ 
when there is no danger of confusion, 
and we write $\deg_U(v)$, $e(U)$, $E(U,W)$, $e(U,W)$.

\section{Tools}\label{sec:tools}
\subsection{Probabilistic tools}
We will make use of the following version of Chernoff's inequality, see e.g.~\cite[Corollary~2.4 and Theorem~2.8]{JLR}.
\begin{theorem}[Chernoff's inequality]\label{thm:Chernoff}
 Let $X$ be the sum of independent binomial random variables, then for any $\delta \in(0,3/2]$ we have
\begin{align*}
 \PP [|X  -\EE(X) |\ge \delta \EE(X)] \le 2\exp (-\delta^2 \EE (X)/3).
\end{align*}
Moreover, we have
\[
\PP[X\ge x]\le \exp(-x)\text{  for  }x\ge 7\EE(X).
\]
\end{theorem}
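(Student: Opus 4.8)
This is the textbook Chernoff bound, and in the paper it is simply quoted from~\cite{JLR}; the argument I would give is the exponential moment method. The plan is: for any $t>0$, apply Markov's inequality to the nonnegative variable $e^{tX}$ to get $\PP[X\ge a]\le e^{-ta}\,\EE[e^{tX}]$, and symmetrically $\PP[X\le a]\le e^{ta}\,\EE[e^{-tX}]$ for $t>0$. So the first step is to bound the moment generating function of $X$. Writing $\mu=\EE(X)$ and decomposing $X=\sum_j B_j$ into its independent binomial summands $B_j\sim\mathrm{Bin}(n_j,p_j)$, independence gives $\EE[e^{tX}]=\prod_j\EE[e^{tB_j}]$, and $\EE[e^{tB_j}]=(1-p_j+p_je^t)^{n_j}=(1+p_j(e^t-1))^{n_j}\le\exp\!\big(n_jp_j(e^t-1)\big)$ by the inequality $1+x\le e^x$; multiplying over $j$, $\EE[e^{tX}]\le\exp\!\big(\mu(e^t-1)\big)$, and the same computation with $-t$ in place of $t$ gives $\EE[e^{-tX}]\le\exp\!\big(\mu(e^{-t}-1)\big)$.

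Next I would optimize $t$ in each one-sided bound. For the upper tail, set $a=(1+\delta)\mu$ and take $t=\ln(1+\delta)>0$, which yields
\[
\PP[X\ge(1+\delta)\mu]\le\left(\frac{e^{\delta}}{(1+\delta)^{1+\delta}}\right)^{\mu}=\exp\!\Big(\mu\big(\delta-(1+\delta)\ln(1+\delta)\big)\Big),
\]
and then it remains to prove the one-variable inequality $\delta-(1+\delta)\ln(1+\delta)\le-\delta^{2}/3$ on $(0,3/2]$. I would set $f(\delta)=\delta-(1+\delta)\ln(1+\delta)+\delta^{2}/3$, observe $f(0)=f'(0)=0$ and $f''(\delta)=\tfrac23-\tfrac1{1+\delta}$ which vanishes only at $\delta=\tfrac12$, conclude that $f'$ (hence $f$) first decreases then increases on $(0,3/2]$, and finish via $f\le\max\{f(0),f(3/2)\}=0$, the value $f(3/2)<0$ being a direct numerical check. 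For the lower tail, taking $a=(1-\delta)\mu$ and $t=\ln\frac{1}{1-\delta}$ gives $\PP[X\le(1-\delta)\mu]\le\big(e^{-\delta}/(1-\delta)^{1-\delta}\big)^{\mu}$, and the analogous estimate $-\delta-(1-\delta)\ln(1-\delta)\le-\delta^{2}/2$ on $(0,1)$ bounds this by $\exp(-\delta^{2}\mu/2)\le\exp(-\delta^{2}\mu/3)$. A union bound over $\{X\ge(1+\delta)\mu\}$ and $\{X\le(1-\delta)\mu\}$ then gives the factor $2$ in the stated two-sided inequality.

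For the ``moreover'' part I would not optimize sharply. Starting from $\PP[X\ge x]\le e^{-tx}\exp(\mu(e^t-1))$ and choosing $t=\ln(x/\mu)$ (legitimate since $x\ge 7\mu>\mu$) gives $\PP[X\ge x]\le\exp\!\big(-x\ln(x/\mu)+x-\mu\big)$. Writing $x=c\mu$ with $c\ge7$, the exponent is $\mu\big(c-1-c\ln c\big)$, so $\PP[X\ge x]\le e^{-x}$ reduces to $c\ln c\ge 2c-1$, i.e.\ $\ln c\ge 2-\tfrac1c$; this holds at $c=7$ (since $\ln 7>1+\tfrac67$), and $\tfrac{d}{dc}(c\ln c-2c+1)=\ln c-1>0$ for $c>e$, so it holds for every $c\ge7$.

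The only genuine work beyond bookkeeping is the two elementary analytic estimates — controlling $\delta-(1+\delta)\ln(1+\delta)$ against $-\delta^{2}/3$ over the \emph{entire} interval $(0,3/2]$, where the bound becomes nearly tight near $\delta=3/2$ so a crude Taylor truncation will not do, and the inequality $\ln c\ge 2-\tfrac1c$ for $c\ge7$ — and I expect these to be the main, if routine, obstacle; everything else is the classical Chernoff argument.
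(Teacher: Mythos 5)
This statement is not proved in the paper at all---it is quoted as a standard tool from \cite{JLR}---and your argument is precisely the classical exponential-moment proof underlying that reference: the MGF bound $\EE[e^{tX}]\le\exp(\EE X\,(e^t-1))$, the choices $t=\ln(1+\delta)$, $t=\ln\tfrac{1}{1-\delta}$, $t=\ln(x/\EE X)$, and the two calculus inequalities (checked over all of $(0,3/2]$, resp.\ $c\ge 7$) are all correct. The only point worth adding is one line for $\delta\in[1,3/2]$, where your lower-tail estimate (stated only for $\delta\in(0,1)$) is not needed because the event $X\le(1-\delta)\EE X$ is impossible for $\delta>1$ and at $\delta=1$ has probability $\PP[X=0]\le e^{-\EE X}\le e^{-\delta^2\EE X/3}$.
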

Another concentration result that we are going to employ is a theorem due to Kim and Vu~\cite{KimVu00}.  We state it in a slightly less general version (without weights on the edges) suited for our purposes.  
\begin{theorem}[Kim-Vu polynomial concentration result~\cite{KimVu00}]\label{thm:kimvu}
Let $H=(V,E)$ be a (not necessarily uniform) hypergraph on $n$ vertices whose edges have cardinality at most $k\in\Nat$, let $(X_v)_{v\in V}$ be a family of mutually independent binomial random variables and set $X:=\sum_{e\in E}\prod_{v\in e}X_v$. Then, for any $\lambda > 1$, we have 
\[
\PP\left[\left|X-\EE(X)\right|>a_k (EE')^{1/2}\lambda^k\right]<d_ke^{-\lambda}n^{k-1},
\]
where $a_k=8^kk!^{1/2}$, $d_k=2e^2$, $n=|V|$, $E=\max(\EE(X),E')$ and $E'=\max_{i=1}^k E_i$, and the quantity $E_i$ is defined as follows
\[
E_i:=\max_{A\subseteq V, |A|=i} \EE\left(\sum_{e\in E, A\subseteq e} \prod_{v\in e\setminus A}X_v\right).
\]
\end{theorem}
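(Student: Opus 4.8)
I would prove Theorem~\ref{thm:kimvu} by induction on the maximum edge cardinality $k$. It is convenient to regard $X$ as a polynomial: writing each $X_v\in\{0,1\}$, the quantity $X=\sum_{e\in E}\prod_{v\in e}X_v$ is a multilinear polynomial of degree $k$ with $0/1$ coefficients in the independent Bernoulli variables $X_v$. For $A\subseteq V$ put $\partial_A X:=\sum_{e\in E,\,A\subseteq e}\prod_{v\in e\setminus A}X_v$; this is again a multilinear $0/1$-polynomial, of degree $\le k-|A|$, in the variables $V\setminus A$, and $E_i=\max_{|A|=i}\EE(\partial_A X)$, so $\EE(\partial_A X)\le E'$ whenever $A\neq\emptyset$. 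Two structural facts will be used: for $A=\{v\}$ the polynomial $\partial_v X$ does not involve $X_v$, so $X_v$ and $\partial_v X$ are independent; and, since $X_v^2=X_v$ on $\{0,1\}$-inputs, the identity $\sum_{v}X_v\,\partial_v X=\sum_{e}|e|\prod_{v\in e}X_v\le k\,X$ holds.

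For the base case $k=1$, $X=\sum_v c_v X_v$ with $c_v\in\{0,1\}$ is a sum of independent Bernoulli variables, $E'=E_1=\EE(X)$, and $(EE')^{1/2}=\EE(X)$; the desired tail then follows directly from Chernoff's inequality (Theorem~\ref{thm:Chernoff}) — the moderate-deviation form for $\lambda$ of order at most $\EE(X)^{1/2}$ and the large-deviation form $\PP[X\ge x]\le e^{-x}$ for $x\ge 7\,\EE(X)$ otherwise — after choosing $a_1,d_1$ large enough.

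For the inductive step ($k\ge 2$) the engine is an "average Lipschitz" martingale estimate. I would expose the variables in order $X_1,\dots,X_n$ and form the Doob martingale $Y_j=\EE[X\mid X_1,\dots,X_j]$; the $j$-th increment $Y_j-Y_{j-1}$ equals, up to a factor in $[0,1]$ determined by $X_j$ and $\EE X_j$, the conditional mean $\EE[\partial_j X\mid X_1,\dots,X_{j-1}]$, hence is controlled by $\partial_j X$. Each $\partial_j X$ has degree $\le k-1$ with all its $E$-parameters at most $E'$, so by the inductive hypothesis it deviates from $\EE(\partial_j X)$ by more than $a_{k-1}E'\lambda^{k-1}$ only on an event of probability $\le d_{k-1}e^{-\lambda}n^{k-2}$. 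Union-bounding over $j\in[n]$ — exactly the step that turns $n^{k-2}$ into $n^{k-1}$ — gives a good event $\mathcal G$ with $\PP[\mathcal G^{c}]\le d_{k-1}e^{-\lambda}n^{k-1}$ on which every increment is bounded by $C:=O(E'\lambda^{k-1})$ and, using $\sum_j X_j\partial_j X\le kX$ and $X=O(E)$ on $\mathcal G$, the conditional variances satisfy $\sum_j\EE[(Y_j-Y_{j-1})^2\mid\mathcal F_{j-1}]\le O(CE)$. Feeding $C$ and $O(CE)$ into a standard Bernstein/Freedman-type martingale tail bound yields, on $\mathcal G$,
\[
\PP\big[\,|X-\EE(X)|>t\,\big]\ \le\ 2\exp\!\Big(-c\min\big(t^{2}/(CE),\ t/C\big)\Big),
\]
and with $t=a_k(EE')^{1/2}\lambda^{k}$, $a_k=8^{k}(k!)^{1/2}$, the right-hand side is at most $(d_k-d_{k-1})e^{-\lambda}n^{k-1}$; adding $\PP[\mathcal G^{c}]$ closes the induction with $d_k=2e^{2}$.

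The hard part will be the bookkeeping, in two respects. First, conditioning on $\mathcal G$ — which is \emph{not} adapted to the filtration — must be handled with care: the standard remedy is to truncate each $\partial_j X$ at level $C$, run the martingale concentration for the truncated process (whose increments are bounded and whose conditional variances are controlled unconditionally), and absorb the truncation discrepancy into $\PP[\mathcal G^{c}]$. Second, one must verify that the constants propagate exactly as stated: each recursive level multiplies the scale by the factor $8$ and the $\sqrt{k}$ arising through $\sqrt{CE}$ accumulates to the $(k!)^{1/2}$ in $a_k$, while the $k-1$ nested union bounds over the $n$ first-order derivatives combine into the single factor $n^{k-1}$ and leave $d_k$ a bounded absolute constant (the value $2e^{2}$ coming from a slightly generous choice at each step). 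The overall architecture — induction on the degree, reduce increments to lower-degree derivatives, freeze the Lipschitz constant via the inductive bound plus a union bound, then a second-moment martingale tail, with Chernoff as the base case — is otherwise routine.
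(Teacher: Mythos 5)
The paper never proves Theorem~\ref{thm:kimvu}: it is imported verbatim from Kim and Vu~\cite{KimVu00} and used only as a black box (in the proof of Lemma~\ref{lem:sparsify}), so there is no in-paper argument to measure yours against. What you sketch is, in outline, a reconstruction of Kim and Vu's own proof: induction on the degree $k$ with Chernoff as the base case, the Doob martingale in the exposure order, increments expressed through the first-order derivatives $\partial_j X$ (degree at most $k-1$, all parameters bounded by $E'$), the inductive hypothesis together with a union bound over the $n$ derivatives (which is indeed where the factor $n^{k-1}$ comes from), and a Bernstein/Freedman-type martingale bound on the resulting good event. The quantitative skeleton also checks out: with increment bound $C=O(E'\lambda^{k-1})$ and variance $O(CE)$, the choice $t=a_k(EE')^{1/2}\lambda^{k}$ gives an exponent of order at least $\lambda$ because $E\ge E'$.

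Two caveats before this could stand as a proof rather than a sketch. First, the statement speaks of binomial variables, while your identities $X_v^2=X_v$ and the formula for the Doob increment via $\partial_j X$ need $X_v\in\{0,1\}$ (or at least values in $[0,1]$); you should either restrict to the Bernoulli case -- which is all the paper actually uses in Lemma~\ref{lem:sparsify} -- or rework that step. Second, what you call bookkeeping is where the real work lies: your good event is not adapted to the filtration, and your bound $\sum_j\EE[(Y_j-Y_{j-1})^2\mid\mathcal{F}_{j-1}]=O(CE)$ is, as derived, only an in-expectation statement obtained from $\sum_j X_j\partial_j X\le kX$, whereas the martingale inequality needs a pointwise bound on a high-probability event; Kim and Vu handle exactly this with a purpose-built martingale lemma rather than off-the-shelf Freedman plus truncation, and their careful parameter choices are what produce $a_k=8^kk!^{1/2}$ and $d_k=2e^2$. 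So your proposal is a faithful outline of the known argument with the hard steps flagged but not carried out; for the purposes of this paper, citing \cite{KimVu00} is the intended route.
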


\subsection{Extremal graph theoretic results}
%We will combine the above theorem with the lower bounds of Gy\H{o}ri and Tuza~\cite{GT87}  and of Hoi~\cite{Hoi05} on the number of edge disjoint copies of $K_r$ in graphs on $n$ vertices and  with $\ex(n,K_r)+m$ edges. 
Let $\edk{m}$ denote the minimum of $N(G,K_r)$ over all graphs 
$G$ with $n$ vertices and $m$ edges and recall that $N(G, K_r)$ denotes the maximum number of edge-disjoint copies of $K_r$ in $G$. Below we summarize the lower bounds on the function $\edk{m}$ shown by Gy\H{o}ri and Tuza~\cite{GT87} and by Hoi~\cite{Hoi05} which we are going to use.

\begin{theorem}\label{thm:edk}
The following bounds hold:
\begin{enumerate}[(i)]
\item $\edt{\ex(n,K_3)+m}{3}\ge \left(\tfrac{5}{9}+o(1)\right)m$ (Gy\H{o}ri and Tuza~\cite{GT87}), and 
\item $\edk{\ex(n,K_r)+m}\ge \frac{m}{\binom{r}{2}-(r-2)}$ (Hoi~\cite[Theorem~1.1]{Hoi05}).
\end{enumerate}
\end{theorem}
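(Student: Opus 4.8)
\textbf{Proof plan for Theorem~\ref{thm:main-rainbow}.}

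The strategy is to combine the (rainbow) stability result for $\pr(G,K_r)$ (Theorem~\ref{thm:stab_phi_rainbow}, which the excerpt announces is proved in Section~\ref{sec:stability}) with a local-modification argument that removes the $o(n^2)$ error and pins down the unique extremal graph. First I would invoke stability: if $\pr(G,K_r) \ge \ex(n,K_r)$, then $G$ is $o(n^2)$-close in edit distance to the Tur\'an graph $T_{r-1}(n)$. Fix a corresponding partition $V(G) = V_1 \cup \dots \cup V_{r-1}$ with the $|V_i|$ nearly balanced, let $B$ denote the set of ``bad'' edges lying inside some part, and let $M$ denote the set of ``missing'' edges between parts; stability gives $|B|, |M| = o(n^2)$. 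I would then define a vertex $v$ to be \emph{exceptional} if it is incident to many bad edges or non-incident to many cross edges (say more than $\eps n$ of either), and note that the number of exceptional vertices is $o(n)$; after moving each exceptional vertex to the part minimizing its bad degree and deleting a vanishing fraction of vertices one checks that every remaining vertex has at most $\eps n$ bad edges and misses at most $\eps n$ cross edges.

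Next I would run the standard ``cleaning + symmetrization'' scheme used for exact decomposition results (as in~\cite{LO-YP,PS07}). The key inequality is
\[
 \pr(G,K_r) = e(G) - (r\tbinom{}{} - 1)\,\nr(G,K_r) \le e(G) - (\tbinom{r}{2}-1)\,N(G,K_r),
\]
where the last step uses the elementary observation from the introduction that $\nr(G,K_r) \le N(G,K_r)$ for every proper coloring. Hence $\pr(G,K_r) \le \phi(G,K_r)$ is false in general, but the displayed bound still says $\pr(G,K_r) \le e(G) - (\tbinom{r}{2}-1)N(G,K_r)$, and I would feed the lower bounds on $N(G,K_r)$ from Theorem~\ref{thm:edk}(ii) (Hoi) into this. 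Writing $e(G) = \ex(n,K_r) + x$ where $x$ is the number of edges in excess of the Tur\'an number (so $x$ can be negative), Hoi's bound gives $N(G,K_r) \ge \frac{x}{\binom{r}{2}-(r-2)}$ when $x \ge 0$, and therefore
\[
 \pr(G,K_r) \le \ex(n,K_r) + x - \frac{(\tbinom{r}{2}-1)\,x}{\tbinom{r}{2}-(r-2)} = \ex(n,K_r) - \frac{(r-3)\,x}{\tbinom{r}{2}-(r-2)}.
\]
For $r \ge 4$ the coefficient of $x$ is strictly negative, so $\pr(G,K_r) \ge \ex(n,K_r)$ forces $x \le 0$, i.e.\ $e(G) \le \ex(n,K_r)$, and then $\pr(G,K_r) = e(G)$ with no rainbow $K_r$ packed at all; combined with the stability structure this forces $G = T_{r-1}(n)$. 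The case $r=3$ is genuinely different: Hoi's bound degenerates ($r-3=0$), so I would instead use the Gy\H ori--Tuza bound, Theorem~\ref{thm:edk}(i), which gives $N(G,K_3) \ge (\tfrac59+o(1))x$ and hence $\pr(G,K_3) \le \ex(n,K_3) + x - 2\cdot(\tfrac59 + o(1))x = \ex(n,K_3) - (\tfrac19 - o(1))x$; this again kills any surplus $x > 0$ of linear-in-$n^2$ order, reducing to the triangle-free regime where the Erd\H os--Goodman--P\'osa theorem / Bondy-type argument identifies $K_{\lceil n/2\rceil, \lfloor n/2\rfloor}$ as the unique optimum.

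The remaining — and main — obstacle is the \emph{boundary} regime: the above global counting only handles $G$ with $e(G)$ exceeding $\ex(n,K_r)$ by a \emph{positive fraction} of $n^2$, whereas stability only tells us the surplus is $o(n^2)$, which Hoi's/Gy\H ori--Tuza's linear lower bounds are too weak to exploit. To close this gap I would argue locally using the near-Tur\'an partition from the first paragraph: for each bad edge $uv$ inside a part $V_i$, because almost every vertex of $G$ has full cross-degree, one can greedily find $r-2$ further vertices, one from each of $r-2$ of the other parts, all adjacent to both $u$ and $v$, forming a $K_r$; moreover distinct bad edges can be handled with vertex-disjoint completions because only $o(n)$ vertices are ``used up'' at each step (here I'd want the bad edges to form a low-degree graph, which is exactly what the exceptional-vertex deletion guarantees). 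This produces $\Omega(|B|)$ edge-disjoint copies of $K_r$, each of which can be made rainbow by a suitable recoloring — but since we only need $N(G,K_r)$, not $\nr$ for a fixed $\chi$, one shows $N(G,K_r) \ge c\,|B|$ directly, yielding $\pr(G,K_r) \le e(G) - (\tbinom{r}{2}-1)c|B| \le \ex(n,K_r) + |B| - (\tbinom{r}{2}-1)c|B|$; choosing the cleaning parameters so that $(\tbinom{r}{2}-1)c > 1$ forces $|B| = 0$, whence $G$ is a subgraph of a complete $(r-1)$-partite graph, and then $e(G) \ge \ex(n,K_r)$ (needed for $\pr(G,K_r)\ge\ex(n,K_r)$ to even be possible, since with no $K_r$ we'd need $e(G)\ge\ex(n,K_r)$) forces $G = T_{r-1}(n)$ exactly. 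The delicate point throughout is ensuring the greedy completions of bad edges are genuinely edge-disjoint and that the error terms from stability, from the exceptional-vertex surgery, and from the near-balance of the partition all stay below the threshold set by the constant $(\tbinom{r}{2}-1)c - 1 > 0$; this is where the probabilistic tools (Chernoff, Kim--Vu) from Section~\ref{sec:tools} enter, to control degrees and codegrees in the cleaned graph.
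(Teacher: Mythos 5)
Your proposal does not address the statement you were asked to prove. The statement is Theorem~\ref{thm:edk}, i.e.\ the two packing lower bounds $\edt{\ex(n,K_3)+m}{3}\ge(\tfrac59+o(1))m$ and $\edk{\ex(n,K_r)+m}\ge m/(\binom{r}{2}-(r-2))$, which are results of Gy\H{o}ri--Tuza and of Hoi about the minimum number of edge-disjoint cliques in \emph{any} graph whose edge count exceeds the Tur\'an number by $m$; in the paper these are quoted as external tools and used (in Proposition~\ref{thm:stab_phi_cliques}) as black boxes. What you wrote instead is a proof plan for Theorem~\ref{thm:main-rainbow}, and it \emph{invokes} Theorem~\ref{thm:edk}(i) and (ii) as given. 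Consequently there is no argument anywhere in your text for either bound: nothing plays the role of Gy\H{o}ri--Tuza's counting of edge-disjoint triangles forced by an $m$-edge surplus, and nothing plays the role of Hoi's induction/packing argument for general $r$. As a proof of the stated theorem, the proposal is empty.

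Separately, even read as a sketch of Theorem~\ref{thm:main-rainbow}, your ``key inequality'' is reversed: since $\nr(G,K_r)\le N(G,K_r)$, the identity $\pr(G,K_r)=e(G)-(\binom{r}{2}-1)\nr(G,K_r)$ yields $\pr(G,K_r)\ge e(G)-(\binom{r}{2}-1)N(G,K_r)=\phi(G,K_r)$, exactly as stated in the introduction, and \emph{not} the upper bound $\pr(G,K_r)\le e(G)-(\binom{r}{2}-1)N(G,K_r)$ that your subsequent computation relies on. To make deductions of that shape one needs a lower bound on $\nr$ itself, i.e.\ on rainbow packings under the given coloring (this is precisely why the paper develops Lemma~\ref{lem:almost_edge_rainbow} and the rainbow counting machinery), so the surplus-killing computation as you wrote it does not go through.
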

We remark, that for particular $r$ better bounds are known ($\edt{\ex(n,K_4)+m}{4}\ge \frac{5m}{18}-o(n^2)$ from~\cite{Hoi05}) and in the case $m=o(n^2)$ as well, see Gy\H{o}ri~\cite{G91}. We will combine the theorem above with the classical stability result due to Erd{\H o}s~\cite{Erd68} and Simonovits~\cite{Si68}. 
\begin{theorem}[Stability theorem]\label{thm:Simstability}
 For every $H$ with  $\chi(H)=r\ge 3$, and every $\gamma>0$ there exist a $\delta>0$ and an $n_0$ such that the following holds. If $G$ is a graph on $n\ge n_0$ vertices with $e(G)\ge \ex(n,H)-\delta n^2$ and 
if it does not contain $H$ as a subgraph, 
then there exists a partition of $V(G)=V_1\dcup\ldots,\dcup V_{r-1}$ such that $\sum_{i=1}^{r-1} e(V_i)< \gamma n^2$.
\end{theorem}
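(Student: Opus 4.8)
The plan is the standard regularity-method proof of the Erd\H{o}s--Simonovits stability theorem, combining the Szemer\'edi regularity lemma, the embedding (key) lemma, Tur\'an's theorem and the (classical, regularity-free) stability of Tur\'an's theorem for cliques. Write $t:=|V(H)|$; since $\chi(H)=r$, a proper $r$-colouring of $H$ exhibits $H$ as a subgraph of $K_r(t)$, the complete $r$-partite graph with all parts of size $t$. Given $\gamma>0$, I would first fix a density threshold $d>0$ and then a regularity parameter $\eps>0$, both small in terms of $\gamma,r,t$ --- small enough for the embedding lemma to apply at density $d$ and for the clique-stability step below; the regularity lemma then supplies a bound $T_0=T_0(\eps)$ on the number of clusters, and only afterwards would I choose $\delta>0$ small in terms of $\eps,d,\gamma$ and $n_0$ large in terms of $T_0$ and $t$.

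Then I would apply the regularity lemma to $G$, obtaining an $\eps$-regular partition $V(G)=V_0\dcup V_1\dcup\dots\dcup V_k$ with $1/\eps\le k\le T_0$, $|V_0|\le\eps n$ and $|V_1|=\dots=|V_k|=:m$, and form the reduced graph $R$ on $[k]$, joining $i\sim j$ precisely when $(V_i,V_j)$ is $\eps$-regular of density at least $d$. The edges of $G$ not lying in a dense regular pair are confined to the $\le k\binom m2$ edges inside clusters, the $\le\eps n^2$ edges meeting $V_0$, the $\le\eps n^2$ edges in irregular pairs, and the $\le\tfrac d2 n^2$ edges in regular pairs of density $<d$. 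Since $e(G)\ge\ex(n,H)-\delta n^2\ge(1-\tfrac1{r-1})\binom n2-2\delta n^2$ for $n\ge n_0$ (by Erd\H{o}s--Stone), a routine computation (divide by $m^2$, using $km\le n\le(1+2\eps)km$) shows $e(R)\ge(1-\tfrac1{r-1})\binom k2-\gamma_1 k^2$, with $\gamma_1=\gamma_1(\gamma)$ as small as we please.

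The key point to check is that $R$ is $K_r$-free: an $r$-clique $\{i_1,\dots,i_r\}$ in $R$ would yield $r$ clusters, pairwise $\eps$-regular of density $\ge d$, and the embedding lemma (valid since $m$ is large) would produce a copy of $K_r(t)$, hence of $H$, inside $G$ --- a contradiction. Thus Tur\'an's theorem gives $e(R)\le\ex(k,K_r)=(1-\tfrac1{r-1})\binom k2$, so $R$ is within $\gamma_1 k^2$ of the Tur\'an number for $K_r$, and stability of Tur\'an's theorem for cliques provides a partition $[k]=W_1\dcup\dots\dcup W_{r-1}$ with $\sum_j e_R(W_j)\le\gamma_2 k^2$, where $\gamma_2\to0$ as $\gamma_1\to0$.

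Finally I would pull this partition back: set $V'_j=\bigcup_{i\in W_j}V_i$ for $j<r-1$ and $V'_{r-1}=V_0\cup\bigcup_{i\in W_{r-1}}V_i$. An edge of $G$ inside some $V'_j$ lies inside a single cluster ($\le k\binom m2\le\tfrac\eps2 n^2$ in total), or meets $V_0$ ($\le\eps n^2$), or joins $V_a,V_b$ with $a,b$ in the same $W_j$; in the last case $(V_a,V_b)$ is irregular ($\le\eps k^2$ pairs, so $\le\eps n^2$ edges), or regular of density $<d$ ($\le\tfrac d2 n^2$ edges in total), or an edge of $R$ with both ends in one $W_j$ ($\le\sum_j e_R(W_j)\le\gamma_2 k^2$ pairs, so $\le\gamma_2 n^2$ edges). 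Summing, $\sum_{j=1}^{r-1}e_G(V'_j)\le(\tfrac52\eps+\tfrac d2+\gamma_2)n^2<\gamma n^2$ by the choice of constants, which is what we wanted. The main obstacle is really careful bookkeeping: getting the order of the quantifiers right ($\gamma$ before $d,\eps$ and the clique-stability parameters, and $\delta,n_0$ only at the very end) and making the embedding lemma's hypotheses ($\eps$ versus $d$, and $m$ large) line up; the one place where $\chi(H)=r$ is used is the embedding step, where the $K_r(t)$ delivered by the key lemma is forced to contain a copy of $H$.
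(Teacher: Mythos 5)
This statement is the classical Erd\H{o}s--Simonovits stability theorem, which the paper does not prove at all: it is imported as a black box with citations to Erd\H{o}s and Simonovits. Your regularity-method derivation is the standard modern proof of that classical result, and the sketch is sound: the quantifier order ($\gamma$, then the clique-stability and embedding parameters, then $d,\eps$, then $T_0$, then $\delta,n_0$) is right, the edge bookkeeping in both directions (passing to the reduced graph $R$ and pulling the partition of $R$ back to $G$) is correct, and the one place $\chi(H)=r$ enters --- forcing $R$ to be $K_r$-free via the embedding lemma --- is correctly identified. Two remarks. First, your argument reduces the general case to the case $H=K_r$ of the very theorem being proven (applied to $R$); this is legitimate only because clique stability has independent, regularity-free proofs (e.g.\ via Zykov symmetrization or Erd\H{o}s's degree-majorization argument), and a complete write-up would have to supply or cite one explicitly rather than fold it into the statement being established --- as written, the $H=K_r$ instance of the theorem is assumed, not proven. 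Second, a small misattribution: the inequality $\ex(n,H)\ge(1-\tfrac1{r-1})\binom n2-o(n^2)$ is not Erd\H{o}s--Stone (which is the matching upper bound) but simply the observation that $T_{r-1}(n)$ contains no $H$ since $\chi(H)=r$. Compared with the paper, which buys brevity by citation, your route makes the dependence of the general case on the clique case explicit, which is exactly the structure the paper itself exploits elsewhere (Proposition~\ref{thm:stab_phi_cliques} feeds Theorem~\ref{thm:Simstability} applied to a $K_r$-free subgraph).
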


Another versatile tool that we are  going to apply is the regularity lemma of Szemer\'edi~\cite{Sz78}. Before stating it,  we introduce first the central concepts. We say that a bipartite graph $G=(V_1\dcup V_2,E)$, or simply $(V_1,V_2)$, is $\eps$-regular  if all pairs of subsets  $U_i\subseteq V_i$, with $|U_i|\ge\eps|V_i|$, $i=1,2$, satisfy
\[
|d_G(V_1,V_2)-d_G(U_1,U_2)|\leq \eps,
\]
where $d_G(U_1,U_2):=\frac{e_G(U_1,U_2)}{|U_1||U_2|}$ is the density of the bipartite graph induced by the color classes $U_1$ and $U_2$. 
An $\eps$-regular pair $(V_1,V_2)$ is called $(\eps,d)$-regular if it has density at least $d$. 

Now consider a partition $\{V_1,\ldots,V_t\}$ of $V$ such that $|V_1|\le|V_2|\le\ldots\le|V_t|\le|V_1|+1$. We call such partition equitable. We refer to an equitable  partition as $\eps$-\emph{regular} if  it satisfies the condition that all but $\eps\binom{t}{2}$ pairs $(V_i,V_j)$ are $\eps$-regular, where $i< j\in [t]$. The vertex subsets $V_i$ are referred to as \emph{clusters} or \emph{classes}.  

The regularity lemma states then the following.
\begin{theorem}[Regularity lemma]\label{thm:reglemma}
For every integer $t_0\ge 1$ and every $\eps>0$ there exist integers $T_0=T_0(t_0,\eps)$
and  $n_0=n_0(t_0,\eps)$ such that every graph $G=(V,E)$ on at least $n_0$ vertices admits an $\eps$-regular partition $V=V_1\dcup\ldots\dcup V_t$ with $t_0\le t \le T_0$.
\end{theorem}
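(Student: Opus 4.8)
The plan is to run the standard energy-increment (index-increment) argument. For a partition $\mathcal{P}=\{V_1,\ldots,V_t\}$ of $V$ I would define its \emph{index} (mean square density)
\[
q(\mathcal{P}):=\sum_{i<j}\frac{|V_i||V_j|}{n^2}\,d_G(V_i,V_j)^2,
\]
and record the trivial bound $0\le q(\mathcal{P})\le 1/2$. The argument rests on three facts. (a) Refining $\mathcal{P}$ never decreases the index: if each $V_i$ is subdivided, then by convexity of $x\mapsto x^2$ (equivalently, a defect form of Cauchy--Schwarz applied to the densities of the sub-pairs weighted by their sizes) the new index is at least $q(\mathcal{P})$. (b) If a pair $(V_i,V_j)$ fails to be $\eps$-regular, witnessed by $U_i\subseteq V_i$, $U_j\subseteq V_j$ with $|U_i|\ge\eps|V_i|$, $|U_j|\ge\eps|V_j|$ and $|d_G(U_i,U_j)-d_G(V_i,V_j)|>\eps$, then subdividing $V_i$ into $U_i,V_i\setminus U_i$ and $V_j$ into $U_j,V_j\setminus U_j$ raises the contribution of that pair to the index by at least $\eps^4\,\frac{|V_i||V_j|}{n^2}$. (c) Consequently, if $\mathcal{P}$ is not $\eps$-regular, i.e.\ more than $\eps\binom{t}{2}$ pairs are irregular, then the common refinement obtained by subdividing every irregular pair along its witnesses is a partition $\mathcal{P}'$ with $q(\mathcal{P}')\ge q(\mathcal{P})+c\,\eps^5$ for an absolute constant $c>0$, while $|\mathcal{P}'|\le |\mathcal{P}|\cdot 2^{|\mathcal{P}|}$.

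Next I would iterate. Start from an arbitrary equitable partition into $t_0$ classes. As long as the current partition is not $\eps$-regular, apply (c) to pass to a refinement; each step increases the index by at least the fixed amount $c\,\eps^5>0$, so after at most $\lceil 1/(2c\,\eps^5)\rceil$ steps the process must halt, and it can only halt at an $\eps$-regular partition. Since each step at most replaces the number of classes $m$ by $m\cdot 2^{m}$, the final number of classes $t$ is bounded by a constant $T_0=T_0(t_0,\eps)$ of tower type; taking $n_0=n_0(t_0,\eps)$ large enough guarantees that all cardinality bounds used along the way (nonempty classes, $|U_i|\ge\eps|V_i|$ nonempty, etc.) are meaningful.

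Finally I would deal with the \emph{equitability} requirement, which the crude refinement of step (c) destroys. The usual remedy is to maintain throughout a partition carrying one small exceptional ("garbage") class $V_0$ of size at most $\eps n$, and, after each application of (c), to re-equitabilize: chop every non-exceptional class into pieces of a common size $m\approx n/T_0$ and sweep the at most $m$ leftover vertices of each class into $V_0$. One checks that the total number of vertices ever placed into $V_0$ over all rounds stays below $\eps n$, and that this re-chopping perturbs all densities, and hence the index, by only $o(1)$, so the increment in (c) is not washed out. Distributing $V_0$ among the other classes at the end (or simply invoking the fact that the stated version already tolerates $|V_i|\le|V_1|+1$) yields the claimed equitable $\eps$-regular partition.

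The main obstacle is the quantitative heart, steps (b)--(c): verifying the defect Cauchy--Schwarz estimate so that the energy gain from one irregular pair is genuinely $\eps^4\frac{|V_i||V_j|}{n^2}$, and then checking that simultaneously refining along the witnesses of \emph{all} irregular pairs accumulates a total gain on the order of $\eps^5$ rather than interfering destructively, together with the bookkeeping that re-equitabilizing after each round costs only $o(1)$ in energy. Once (b)--(c) are in place, the monotonicity in (a) and the termination count are a one-line computation.
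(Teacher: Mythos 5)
The paper does not prove this statement at all: it is Szemer\'edi's regularity lemma, quoted verbatim as a tool with a citation to Szemer\'edi's original paper, so there is no internal proof to compare against. Your energy-increment (index) argument, including the defect Cauchy--Schwarz gain of order $\eps^4$ per irregular pair, the cumulative $\eps^5$ boost, the tower-type bound on the number of classes, and the exceptional-class bookkeeping for equitability, is exactly the standard proof from the cited literature and is correct in outline.
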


The regularity lemma is accompanied by a very useful fact called the \emph{counting lemma}, see e.g.\ a survey of  Koml\'os and Simonovits~\cite{KoSi96}. We state here one version that will be enough for our needs.
\begin{lemma}[Counting lemma]\label{lem:counting}
For every $r\ge3$, every $\delta>0 $ and $\gamma>0$ there exist an $\eps>0$ and $m_0$ such that the following holds. Let $m\ge m_0$, let $V_1$, \ldots, $V_r$ be vertex-disjoint subsets of size $m$ or $m+1$ of some graph $G$, such that each pair $(V_i,V_j)$ (for $i\neq j\in[r]$) is $\eps$-regular and has density $d_{ij}\ge \delta$. Then, for all $i\neq j$, all but at most $4r\eps (m+1)^2$ edges from $(V_i,V_j)$ lie in 
\begin{equation}\label{eq:count}
\left(1\pm\gamma\right)\prod_{\{k,\ell\}\in\binom{[r]}{2}, \{k,\ell\}\neq\{i,j\}} d_{k\ell}\prod_{s\in[r],s\neq i,j}|V_s|
\end{equation}
 copies $K$ of $K_r$ such that $|K\cap V_i|=1$ for all $i$.
\end{lemma}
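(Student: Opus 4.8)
The plan is to reduce the statement to the ordinary counting lemma for a clique on fewer vertices, applied to the common neighbourhood of the two endpoints of a typical edge. Fix $i\neq j$ and, to ease notation, set $S=[r]\setminus\{i,j\}$. A copy $K$ of $K_r$ with one vertex in each $V_s$ that passes through a given edge $e=\{x,y\}$, $x\in V_i$, $y\in V_j$, is exactly a transversal copy of $K_{r-2}$ inside the $(r-2)$-partite graph on the parts $V_s':=N(x)\cap N(y)\cap V_s$, $s\in S$. Hence it is enough to show that, for all but $4r\eps(m+1)^2$ of the edges $e$, the sets $V_s'$ have close to their expected sizes and the pairs among them are regular, and then to apply the counting lemma for $K_{r-2}$; the latter holds by a straightforward induction on $r$ (the base case being that a regular pair contains the expected number of edges), or one may quote~\cite{KoSi96}.

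First I would isolate the bad edges in two stages, each using the standard fact that in an $\eps$-regular pair $(A,B)$ of density $d$ all but fewer than $2\eps|A|$ vertices $a\in A$ have $\deg_B(a)=(d\pm\eps)|B|$. Call $x\in V_i$ \emph{good} if $\deg_{V_s}(x)=(d_{is}\pm\eps)|V_s|$ for every $s\in S$; all but fewer than $2(r-2)\eps|V_i|$ vertices of $V_i$ are good. For a good $x$ the set $U_s:=N(x)\cap V_s$ has size at least $(\delta-\eps)|V_s|\ge\eps|V_s|$ once $\eps\le\delta/2$, so regularity of the pair $(V_s,V_j)$ applied to the large subset $U_s$ shows that all but fewer than $2\eps|V_j|$ vertices $y\in V_j$ satisfy $\deg_{U_s}(y)=(d_{sj}\pm 2\eps)|U_s|=(d_{is}d_{sj}\pm c\eps)|V_s|$, where $c$ is an absolute constant. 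Call an edge $e=\{x,y\}$ \emph{good} if $x$ is good and $y$ is non-exceptional for every $s\in S$ in the above sense. Double counting, at most $2(r-2)\eps|V_i||V_j|$ edges have a non-good endpoint in $V_i$, and at most a further $2(r-2)\eps|V_i||V_j|$ have an endpoint in $V_j$ that is exceptional for some $s\in S$; so all but at most $4(r-2)\eps(m+1)^2\le 4r\eps(m+1)^2$ edges of $(V_i,V_j)$ are good, which is the claimed bound.

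For a good edge $e$, every $V_s'$ has size $(d_{is}d_{sj}\pm c\eps)|V_s|\ge(\delta^2-c\eps)|V_s|$, which is at least $\sqrt\eps|V_s|$ for $\eps$ small; so by the slicing lemma each pair $(V_s',V_{s'}')$ ($s\neq s'$ in $S$) is $\sqrt\eps$-regular of density $d_{ss'}\pm\eps\ge\delta/2$. The counting lemma for $K_{r-2}$ then gives that the number of transversal copies of $K_{r-2}$ in $(V_s')_{s\in S}$ equals $(1\pm\gamma')\prod_{\{s,s'\}\in\binom{S}{2}}d_{ss'}\cdot\prod_{s\in S}|V_s'|$. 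Substituting $|V_s'|=(d_{is}d_{sj}\pm c\eps)|V_s|$ and regrouping the densities — the pairs $\{k,\ell\}\neq\{i,j\}$ being precisely the pairs inside $S$ together with $\{i,s\}$ and $\{j,s\}$ for $s\in S$ — this is exactly the right-hand side of~\eqref{eq:count} with an overall error factor $(1\pm\gamma'')$, where $\gamma''\to 0$ as $\gamma'\to0$ and $\eps\to0$ (the $\pm\eps$ slack in the pair densities being absorbed into this error). So, given $\gamma$, one first chooses $\gamma'$ and an upper bound on $\eps$ making $\gamma''<\gamma$, then shrinks $\eps$ to also meet the thresholds $\eps\le\delta/2$ and $\delta^2-c\eps\ge\sqrt\eps$ and the requirements of the $K_{r-2}$-counting lemma, and finally picks $m_0$ large enough that the latter applies to parts of size at least $(\delta^2-c\eps)m_0$.

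The one genuinely delicate point is the two-stage cleaning needed to control $N(x)\cap N(y)\cap V_s$: typical degree of $x$ into $V_s$ and typical degree of $y$ into $V_s$ do not by themselves control the size of their intersection, and one must instead apply regularity of the pair $(V_s,V_j)$ to the (necessarily large) set $U_s=N(x)\cap V_s$, which is exactly why the lower bound $\delta$ on the densities, and not mere positivity, is used. Everything else is bookkeeping of error terms in the order $\gamma\mapsto\gamma'\mapsto\eps\mapsto m_0$.
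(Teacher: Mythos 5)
The paper does not actually prove this lemma: it is stated as a standard tool and attributed to the Koml\'os--Simonovits survey, so there is no in-paper argument to compare against. Your proof is a correct, self-contained derivation along the standard lines: the reduction of the per-edge count for $K_r$ to the global (two-sided) counting lemma for $K_{r-2}$ on the common neighbourhoods $V_s'=N(x)\cap N(y)\cap V_s$ is exactly the right move, and it is not circular, since the global version for $K_{r-2}$ is the classical statement (provable by the vertex-by-vertex induction you indicate, or quotable from the same survey the paper cites). The two-stage cleaning is the one genuinely delicate point and you handle it correctly: typical degrees of $x$ and of $y$ into $V_s$ alone do not control $|N(x)\cap N(y)\cap V_s|$, whereas applying regularity of $(V_s,V_j)$ to the large set $U_s=N(x)\cap V_s$ does, and this is precisely where the hypothesis $d_{ij}\ge\delta$ (rather than mere regularity) enters. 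Your bookkeeping also lands on the stated exceptional-edge bound, since $2(r-2)\eps|V_i||V_j|$ edges are lost at each of the two stages and $4(r-2)\eps(m+1)^2\le 4r\eps(m+1)^2$; the density perturbations $d_{ss'}\pm\eps$ and $d_{is}d_{sj}\pm c\eps$ contribute only a multiplicative $1\pm O(r^2\eps/\delta^2)$, absorbed into $\gamma$ by the order of quantification $\gamma\mapsto\gamma'\mapsto\eps\mapsto m_0$ that you specify. The only cosmetic remark is that the sets $V_s'$ need not be balanced, so one must invoke the global counting lemma in its unbalanced form, which is how it is usually stated anyway.
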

We will also need the following theorem of Pippenger and Spencer~\cite{PS89}, 
see also R\"odl~\cite{VR85} and Alon and Spencer~[Theorem~4.7.1]\cite{AS00}.  
\begin{theorem}\label{PS-matching}
For every integer $s\ge 2$, $k\ge 1$ and real $c_0>0$, there are $c_1=c_1(s,c_0, k)>0$ and 
$d_0 = d_0(s, c_0, k)$ such that for any $n$ and $D\geq d_0$ the following holds.

Every $s$-uniform hypergraph ${\mathcal H}$ on a set $V$ of $n$ vertices 
satisfying all of the following conditions\\
(1) for all $x\in V$ but at most $c_1n$ of them, 
${\rm deg}(x) = (1\pm c_1)D$; \\
(2) for all $x\in V$, ${\rm deg}(x)\leq kD$;\\
(3) for any two distinct $x, y\in V$, ${\rm codeg}(x, y)<c_1 D$;\\
contains a matching consisting of at least $(1-c_0)n/s$ hyperedges.  
\end{theorem}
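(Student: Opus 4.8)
The plan is to prove Theorem~\ref{PS-matching} by R\"odl's semi-random ``nibble'': we build the matching in a bounded number $t$ of rounds, where in each round we randomly sparsify the current hypergraph by keeping each edge with a small probability $p$, retain those sampled edges that meet no other sampled edge (these automatically form a matching), delete them together with all vertices they cover, and recurse on the hypergraph induced on the vertices that remain. The engine of the argument is that one round covers a constant fraction of the vertices while approximately preserving the three structural hypotheses --- almost $D'$-regular, maximum degree $\le kD'$, codegree $<\delta D'$ --- now with a slightly smaller degree parameter and a slightly larger error parameter. Hence after $t$ rounds, with $t$ chosen so that the surviving fraction $(1-\Theta(\eps))^t$ drops below $c_0$, at most $c_0n$ vertices remain uncovered, and the union $M_1\cup\dots\cup M_t$ of the round-matchings (which is a matching of the original $\mathcal H$, since each $M_{i+1}$ avoids the vertices covered by $M_1,\dots,M_i$) has at least $(1-c_0)n/s$ edges.

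\textbf{One nibble (first moment).} Let $\mathcal H'$ be $s$-uniform on $n'$ vertices, $(1\pm\delta)D'$-regular off an exceptional set of $\le\delta n'$ vertices, of maximum degree $\le kD'$ and codegree $<\delta D'$. Put $p=\eps/D'$ for a small $\eps=\eps(s,k,c_0)$, keep each edge independently with probability $p$ to form $E^\ast$, and let $M=\{e\in E^\ast:\ e\cap e'=\emptyset\text{ for all }e'\in E^\ast\setminus\{e\}\}$. For distinct $e,e'\ni v$ we cannot have both in $M$, so the events $\{e\in M\}_{e\ni v}$ are pairwise disjoint and
\[
\PP[v\text{ covered by }M]=\sum_{e\ni v}\PP[e\in M]=\sum_{e\ni v}p\,(1-p)^{N_e},
\]
where $N_e=|\{e'\neq e:\ e'\cap e\neq\emptyset\}|$; the codegree bound gives $N_e=(1+o(1))\sum_{u\in e}\deg_{\mathcal H'}(u)$, which lies between $sD'(1-o(1))$ and $skD'$, the latter bound being the one place condition~(2) enters. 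So for a typical $v$ we get $\PP[v\text{ covered}]=\Theta_{s,k}(\eps)$ (and $=(1+o(1))\eps e^{-s\eps}$ in the near-regular case), and likewise a typical surviving $v$ has, conditionally, $\deg_{\mathcal H''}(v)=(1+o(1))\deg_{\mathcal H'}(v)\cdot(1-\Theta(\eps))^{s-1}$, where $\mathcal H''$ is the hypergraph induced on the uncovered vertices. Since codegrees can only decrease, $\mathcal H''$ again satisfies (1)--(3), with $D''=\Theta(D')$ slightly below $D'$ and a slightly enlarged exceptional set.

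\textbf{Concentration, iteration, choice of constants.} The displayed identities are about expectations; turning them into ``with high probability'' statements about the realized $M$ --- namely that $M$ covers a $\Theta(\eps)(1\pm o(1))$-fraction of the vertices and that all but an $o(1)$-fraction of surviving vertices have degree $(1+o(1))D''$ in $\mathcal H''$ --- is the technical heart, and uses the standard concentration toolkit for nibble arguments (Talagrand's inequality, or an exposure martingale with the unlikely outcomes pruned); this is where the hypothesis $D\ge d_0$ is needed, $d_0$ being chosen large enough that each of these estimates fails with probability $<1/(2t)$ in each round. Now iterate: start from $\mathcal H$ with $\delta=c_1$, $D'=D$, and apply the nibble $t:=\lceil\log(2/c_0)/\log\tfrac{1}{1-\Theta(\eps)}\rceil$ times; the vertex set shrinks by a factor $1-\Theta(\eps)$ each round (up to $o(1)$ slack), so after $t$ rounds fewer than $c_0n$ vertices are uncovered. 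It remains to check the constants are consistent: $t$ is bounded once $\eps,c_0$ are fixed; each round multiplies $D'$ by a factor bounded away from $0$, adds $o(1)$ to the exceptional fraction, and multiplies the codegree-to-degree ratio by a bounded factor, so choosing $\eps$ small and then $c_1$ small enough (to absorb $t$ rounds of drift) keeps all hypotheses valid for every nibble; a union bound over the $t$ rounds completes the proof.

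\textbf{Where the difficulty lies.} The combinatorial skeleton above is routine for anyone who has seen a nibble. The one genuinely delicate ingredient is the concentration in the nibble step: ``number of vertices covered in a round'' is a polynomial of unbounded degree in the edge-indicators, and flipping a single indicator can, in a bad realization, change the covered/uncovered status of $\Theta(s^2kD')$ vertices, so a direct bounded-differences/Azuma estimate over the $\Theta(n'D')$ indicators is far too lossy. One gets around this exactly as in Pippenger--Spencer: observe that such bad realizations are rare, and invoke Talagrand-type inequalities (or restrict the martingale to the likely regime) to show the nibble concentrates near its first-moment prediction; managing this uniformly as the parameters $D'$ and $\delta$ drift over the $t$ rounds is the real content of the proof.
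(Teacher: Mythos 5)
The paper does not prove Theorem~\ref{PS-matching} at all: it is imported as a black-box tool of Pippenger and Spencer \cite{PS89} (see also R\"odl \cite{VR85} and Alon--Spencer \cite{AS00}), so there is no internal proof to compare your write-up against. What you propose is exactly the standard semi-random (R\"odl nibble) proof of that theorem, i.e.\ the classical route found in those references, not a different one; the skeleton (one sparsification round, first-moment analysis, concentration, iteration with parameter drift) is the right one and your first-moment identities, e.g.\ $\PP[v\text{ covered}]=\sum_{e\ni v}p(1-p)^{N_e}$ via disjointness of the events $\{e\in M\}$ over $e\ni v$, are correct.

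Judged as a standalone proof, however, this is a plan rather than an argument, and the gap is the one you flag yourself: the concentration statements for a single round --- that the realized matching covers a $\Theta(\eps)$-fraction of the vertices and that all but an $o(1)$-fraction of surviving vertices have degree close to the predicted value in the induced hypergraph --- are only attributed to ``Talagrand or a pruned martingale'' and never carried out, and this is where essentially all of the work in \cite{PS89} lies, precisely because flipping one edge indicator can change the status of many vertices. Two further points would need real care in a complete write-up: your lower bound $N_e\ge(1-o(1))sD'$ is valid only for edges avoiding the exceptional/low-degree vertices, so you must show such edges are negligible at a typical $v$; and the claim $\deg_{\mathcal{H}''}(v)=(1+o(1))\deg_{\mathcal{H}'}(v)(1-\Theta(\eps))^{s-1}$ treats the survival events of the $s-1$ other vertices of an edge as nearly independent, which must be justified quantitatively from the codegree hypothesis (3) rather than asserted. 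The drift bookkeeping over the $t$ rounds (exceptional fraction, maximum-degree ratio, codegree ratio), which is what ultimately determines $c_1$ and $d_0$, is likewise stated but not done. In short: correct approach, consistent with the source the paper cites, but not yet a proof.
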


\section{Stability results for rainbow $K_r$-decompositions}\label{sec:stability}
%To prove an upper bound, we first show in Lemma~\ref{lem:stability} an approximate structural result about the function $\pr(n,H)$. 
\subsection{Warm-up: stability for $\phi(G,K_r)$}

In~\cite{LO-YP}  we proved the following approximate result about graphs 
$G\in\mathcal{G}_n$ with $\phi(G,H)\ge\ex(n,H)-o(n^2)$. 
\begin{theorem}[Lemma~4 from~\cite{LO-YP}]\label{lem:stability}
 For every $H$ with $\chi(H)=r\ge 3$, $H\neq K_r$, and for every $\gamma>0$ there exist $\eps>0$ and $n_0\in\Nat$ such that for every graph $G$ on $n\ge n_0$ vertices the following is true. If 
\[
\phi(G,H)\ge\ex(n,H)-\eps n^2 
\]
then there exists a partition of $V(G)=V_1\dcup\ldots\dcup V_{r-1}$ with $\sum_{i=1}^{r-1}e(V_i)<\gamma n^2$.
\end{theorem}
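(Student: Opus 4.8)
The plan is to combine an upper bound on $e(G)$ with the Erd\H{o}s--Simonovits stability theorem applied to a large $H$-free subgraph of $G$.

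Since $\phi(G,H)\le e(G)$, the hypothesis already gives $e(G)\ge\ex(n,H)-\eps n^2$. The key step I would establish is the complementary bound: $\phi(G,H)\ge\ex(n,H)-\eps n^2$ forces $e(G)\le\ex(n,H)+\eps' n^2$ for some $\eps'=\eps'(\eps,H)\to0$ as $\eps\to0$. Given this, the finish is routine. Writing $m:=e(G)-\ex(n,H)\in[-\eps n^2,\eps' n^2]$, the identity $\phi(G,H)=e(G)-(e(H)-1)N(G,H)$ and the hypothesis yield $N(G,H)\le\frac{m+\eps n^2}{e(H)-1}\le\frac{(\eps+\eps')n^2}{e(H)-1}$. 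Taking a maximum family of $N(G,H)$ pairwise edge-disjoint copies of $H$ and deleting all $e(H)N(G,H)$ of their edges leaves an $H$-free subgraph $G'$ (by maximality no copy survives), with $e(G')\ge e(G)-e(H)N(G,H)\ge\ex(n,H)-O\bigl((\eps+\eps')n^2\bigr)$. Theorem~\ref{thm:Simstability} applied to $G'$ with parameter $\gamma/2$ produces a partition $V=V_1\dcup\ldots\dcup V_{r-1}$ with $\sum_i e_{G'}(V_i)<\frac\gamma2 n^2$, and putting the at most $O((\eps+\eps')n^2)$ deleted edges back keeps $\sum_i e_G(V_i)<\gamma n^2$ once $\eps,\eps'$ are small enough.

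So the work lies in bounding $e(G)$ from above; suppose instead $e(G)\ge\ex(n,H)+\eps' n^2$. I would apply the regularity lemma (Theorem~\ref{thm:reglemma}) with parameters $\eps_0\ll d\ll\eps'$ and $t_0$ large, and let $R$ be the reduced graph on the clusters, with an edge for every $\eps_0$-regular pair of density at least $d$. A routine density count gives $e(R)\ge\ex(t,K_r)+\bigl(\tfrac{e(G)-\ex(n,H)}{n^2}-O(d)-o(1)\bigr)t^2\ge\ex(t,K_r)+\tfrac12\eps' t^2$. By Theorem~\ref{thm:edk} (Gy\H{o}ri--Tuza for $r=3$, Hoi for $r\ge4$), $R$ then contains $\ge c_r\bigl(e(R)-\ex(t,K_r)\bigr)$ pairwise edge-disjoint copies of $K_r$ with a constant $c_r>\tfrac1{\binom r2-1}$ that strictly beats the trivial ratio. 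Each such $K_r$ in $R$ spans $r$ clusters all of whose pairs are $\eps_0$-regular of density at least $d$; since $\chi(H)=r$, the counting lemma (Lemma~\ref{lem:counting}) and its blow-up version give $\Omega\bigl((n/t)^{v(H)}\bigr)$ copies of $H$ inside that region, and Theorem~\ref{PS-matching} applied to the (near-regular, small-codegree) hypergraph of those copies on the edges of the region extracts a near-optimal edge-disjoint subfamily. The regions attached to edge-disjoint copies of $K_r$ in $R$ are themselves edge-disjoint in $G$, so summing over the Gy\H{o}ri--Tuza/Hoi family gives $N(G,H)\ge c''\bigl(e(G)-\ex(n,H)\bigr)-o(n^2)$ for a suitable $c''$. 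Substituting into $\phi(G,H)=e(G)-(e(H)-1)N(G,H)$ yields
\[
\phi(G,H)\le\ex(n,H)+\bigl(1-(e(H)-1)c''\bigr)\bigl(e(G)-\ex(n,H)\bigr)+o(n^2),
\]
which, once the parameters are tuned so that $(e(H)-1)c''>1$, drops below $\ex(n,H)-\eps n^2$ --- a contradiction.

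The hard part is precisely the quantitative bookkeeping behind $(e(H)-1)c''>1$, i.e.\ ensuring that the extracted edge-disjoint copies of $H$ genuinely overwhelm the surplus $e(G)-\ex(n,H)$. This is delicate because the slack gained from Theorem~\ref{thm:edk} on the reduced graph (with $c_r$ only slightly above $\tfrac1{\binom r2-1}$ for small $r$) has to survive the loss incurred when passing from copies of $K_r$ in $R$ to copies of $H$ in $G$ via the counting lemma and the matching step; in particular the density threshold $d$ must be taken $\ll\eps'$ for the reduced-graph count to work, which caps how efficiently the edges of each regular region can be packed into edge-disjoint copies of $H$. Closing these inequalities --- for instance by also accounting for edge-disjoint copies of $H$ straddling several $K_r$-regions, or by a sharper extraction step --- is where the real content sits; the remainder is manipulation of the identity $\phi(G,H)=e(G)-(e(H)-1)N(G,H)$ together with stability.
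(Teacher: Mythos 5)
You should first be aware that the paper does not actually prove this statement: it is imported verbatim as Lemma~4 of~\cite{LO-YP}, and the paper explicitly says that proof ``was built on a theorem of Pikhurko and Sousa~\cite{PS07} about weighted decompositions of graphs'' --- which is also why the case $H=K_r$ is excluded here and handled separately by Proposition~\ref{thm:stab_phi_cliques}, where Theorem~\ref{thm:edk} is applied \emph{directly to $G$} with no regularity lemma. Your overall architecture --- reduce everything to showing that $e(G)\ge\ex(n,H)+\eps'n^2$ forces $(e(H)-1)N(G,H)>e(G)-\ex(n,H)+\eps n^2$, then delete a maximum edge-disjoint family to get an almost-extremal $H$-free graph and invoke Theorem~\ref{thm:Simstability} --- is sound and is exactly the template of Proposition~\ref{thm:stab_phi_cliques}. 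The problem is the key step, and it is not ``bookkeeping''.

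Two concrete gaps. First, your application of Theorem~\ref{PS-matching} is invalid as stated: in a regular $r$-partite region whose pairs have unequal densities, the hypergraph of copies of $H$ on the edge set is far from near-regular --- an edge of the pair $(V_i,V_j)$ lies in a number of copies that scales like $d_{ij}^{-1}$ times the total count, so condition (1) of Theorem~\ref{PS-matching} fails badly whenever the $d_{ij}$ are not all comparable. This is precisely why the paper's Lemma~\ref{lem:almost_edge_rainbow} first splits each pair randomly with density-dependent probabilities $\alpha_S/d_{ij}$ (and then needs Lemma~\ref{lem:sparsify} to count inside the sparsified pieces) before Pippenger--Spencer can be used. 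Second, and more fundamentally, the per-region yield is capped by the region's \emph{sparsest} pair: since $\chi(H)=r$, the clusters induce a proper $r$-colouring of any copy of $H$ lying in the region, and every pair of colour classes of $H$ carries at least one edge (else merge them), so a region containing a pair of density $d^*$ admits at most $d^*(n/t)^2$ edge-disjoint copies of $H$. Hoi's theorem applied to the \emph{unweighted} reduced graph $R$ gives no control over which edges its edge-disjoint $K_r$'s use; in the model case $G=T_{r-1}(n)$ plus pairs of density $d^*=\Theta(\eps')$ inside one class, \emph{every} $K_r$ of $R$ must use such a sparse pair, and whether $(e(H)-1)c''>1$ survives then depends on assigning the lightest colour-class pair of $H$ to the sparse cluster pair and extracting with essentially no loss --- exactly the density-weighted accounting that your sketch does not perform and that the Pikhurko--Sousa weighted decomposition theorem (the actual engine behind~\cite{LO-YP}, Lemma~4) is designed to supply. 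As written, the inequality $N(G,H)\ge c''\bigl(e(G)-\ex(n,H)\bigr)-o(n^2)$ with $(e(H)-1)c''>1$ is asserted but not established, and you acknowledge this; the missing ingredient is the weighted machinery (or, equivalently, the random-splitting device of Lemma~\ref{lem:almost_edge_rainbow}), not a tuning of constants.
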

Our proof was built on a theorem of Pikhurko and Sousa~\cite{PS07} about weighted decompositions of graphs. Due to a technical calculation, the natural case $H=K_r$ remained uncovered. The following proposition provides a short proof of the stability for cliques for the function $\phi(G,K_r)$. It will be used to obtain  the stability for cliques for the rainbow function $\pr(G,K_r)$. 
%Finally we explain how the general stability result follows from Theorem~\ref{lem:stability}. 
%Now we are in position to extend Theorem~\ref{lem:stability} to cliques quite easily.

\begin{proposition}\label{thm:stab_phi_cliques}
 For every $r\ge 3$ and for every $\gamma>0$ there exist $\eps>0$ and $n_0\in\Nat$ such that for every graph $G$ on $n\ge n_0$ vertices the following is true. If 
\[
\phi(G,K_r)\ge\ex(n,K_r)-\eps n^2 
\]
then there exists a partition of $V(G)=V_1\dcup\ldots\dcup V_{r-1}$ with $\sum_{i=1}^{r-1}e(V_i)<\gamma n^2$.
\end{proposition}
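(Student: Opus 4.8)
The plan is to deduce the statement from the Erd\H{o}s--Simonovits stability theorem (Theorem~\ref{thm:Simstability}) by showing that the hypothesis forces $G$ to become a near-extremal $K_r$-free graph after deleting only $o(n^2)$ edges. Everything is driven by the identity $\phi(G,K_r)=e(G)-\bigl(\binom{r}{2}-1\bigr)N(G,K_r)$. Since $N(G,K_r)\ge 0$, the assumption $\phi(G,K_r)\ge\ex(n,K_r)-\eps n^2$ already gives $e(G)\ge\ex(n,K_r)-\eps n^2$, so $G$ is edge-dense and the real task is an upper bound on the number $N(G,K_r)$ of pairwise edge-disjoint copies of $K_r$: if $\mathcal{F}$ is a maximum such family, then deleting the at most $\binom{r}{2}|\mathcal{F}|$ edges lying in some member of $\mathcal{F}$ leaves a $K_r$-free graph $G'$, because any copy of $K_r$ surviving the deletion would be edge-disjoint from all of $\mathcal{F}$ and could be appended to it, contradicting maximality.

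First I would prove that $N(G,K_r)\le C_r\eps n^2$ for a constant $C_r=C_r(r)$. If $e(G)\le\ex(n,K_r)+\eps n^2$ this is immediate, since $\bigl(\binom{r}{2}-1\bigr)N(G,K_r)=e(G)-\phi(G,K_r)\le 2\eps n^2$. Otherwise write $e(G)=\ex(n,K_r)+m$ with $m>\eps n^2$, so $m=\Theta(n^2)$, and invoke Theorem~\ref{thm:edk}: for $r\ge 4$ it gives $N(G,K_r)\ge m/\bigl(\binom{r}{2}-(r-2)\bigr)$, and for $r=3$ the Gy\H{o}ri--Tuza bound (i) gives $N(G,K_3)\ge\bigl(\tfrac59+o(1)\bigr)m$, which for $n$ large exceeds $\tfrac12 m$. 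Plugging into the identity yields $\phi(G,K_r)\le\ex(n,K_r)-\beta_r m$, where $\beta_r$ equals $\bigl(\binom{r}{2}-1\bigr)$ times the packing constant just used, minus $1$; thus $\beta_r=\tfrac{r-3}{\binom{r}{2}-(r-2)}>0$ for $r\ge 4$ and $\beta_3=\tfrac19+o(1)>0$. Comparing with the hypothesis now forces $m\le\eps n^2/\beta_r$, and hence $N(G,K_r)=\bigl(e(G)-\phi(G,K_r)\bigr)/\bigl(\binom{r}{2}-1\bigr)\le(1+\beta_r^{-1})\eps n^2/\bigl(\binom{r}{2}-1\bigr)$. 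This is precisely the step that requires the sharper triangle estimate (i): Hoi's general bound (ii) only gives $\beta_3=0$, which yields no control on $m$.

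With $N(G,K_r)\le C_r\eps n^2$ in hand, the $K_r$-free graph $G'$ above satisfies $e(G')\ge e(G)-\binom{r}{2}C_r\eps n^2\ge\ex(n,K_r)-\bigl(1+\binom{r}{2}C_r\bigr)\eps n^2$. Then I would apply Theorem~\ref{thm:Simstability} to $G'$ with $H=K_r$ and error parameter $\gamma/2$: it provides $\delta>0$ and $n_1$ such that, as soon as $\eps\le\delta/\bigl(1+\binom{r}{2}C_r\bigr)$ and $n\ge n_1$, there is a partition $V(G')=V_1\dcup\cdots\dcup V_{r-1}$ with $\sum_{i=1}^{r-1}e_{G'}(V_i)<\tfrac{\gamma}{2}n^2$. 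Restoring the deleted edges raises this sum by at most $\binom{r}{2}C_r\eps n^2$, so $\sum_{i=1}^{r-1}e_G(V_i)<\gamma n^2$ provided in addition $\eps\le\gamma/\bigl(2\binom{r}{2}C_r\bigr)$. Taking $\eps$ to be the minimum of these two bounds and $n_0$ large enough (in particular, large enough that the coefficient $\tfrac59+o(1)$ in (i) exceeds $\tfrac12$) then completes the proof, since $V(G')=V(G)$.

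The step I expect to be the only genuine obstacle is the regime $e(G)>\ex(n,K_r)$: one must argue that $m$ edges in excess of the Tur\'an number force so many edge-disjoint copies of $K_r$ that the resulting saving $\bigl(\binom{r}{2}-1\bigr)N(G,K_r)\ge(1+\beta_r)m$ already pushes $\phi(G,K_r)$ strictly below $\ex(n,K_r)$ --- and for $r=3$ this works only by a hair, relying on Gy\H{o}ri and Tuza's constant $\tfrac59$ rather than the $\tfrac12$ coming from Hoi's general bound. Once $m$ is controlled, the remainder is routine bookkeeping around one application of the Erd\H{o}s--Simonovits stability theorem.
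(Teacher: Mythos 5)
Your proposal is correct and follows essentially the same route as the paper: use the identity $\phi(G,K_r)=e(G)-\bigl(\binom{r}{2}-1\bigr)N(G,K_r)$ together with the Gy\H{o}ri--Tuza bound for $r=3$ and Hoi's bound for $r\ge4$ to force the excess $m=e(G)-\ex(n,K_r)$ and hence $N(G,K_r)$ to be $O(\eps n^2)$, then delete the edges of a maximum packing and apply the Erd\H{o}s--Simonovits stability theorem to the resulting $K_r$-free graph. Your bookkeeping at the end (applying stability with $\gamma/2$ and explicitly accounting for the restored edges) is in fact slightly more careful than the paper's.
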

\begin{proof} Let $\gamma> 0 $ and $r\ge 3$ be given. Take $\delta$ and $n_0$ as guaranteed by Theorem~\ref{thm:Simstability} on input $\gamma$ and $K_r$, and choose with foresight $\eps:=\frac{\min(\gamma/2,\delta)}{10r^3}$. 
Let $G$ be a graph on $n\ge n_0$ vertices with $\phi(G,K_r)\ge\ex(n,K_r)-\eps n^2$. Assume that $e(G)=\ex(n,K_r)+m$, where $m$ might be negative. It follows from the lower bound on $\phi(G,K_r)$ and from the identity $\phi(G,K_r)=e(G)-(\binom{r}{2}-1)N(G, K_r)$, that  
\begin{equation}\label{eq:NGKr_lowerbd}
N(G, K_r)\le \frac{m+\eps n^2}{\binom{r}{2}-1}.
\end{equation}
On the other hand we have for $m>0$, by Theorem~\ref{thm:edk}, that 
\begin{align*}
N(G,K_3)&\ge\edt{\ex(n,K_3)+m}{3}\ge \frac{(5+o(1)) m}{9}, \qand\\
 N(G,K_r)&\ge\edk{\ex(n,K_r)+m}\ge \frac{m}{\binom{r}{2}-(r-2)},
\end{align*}
and thus it follows with~\eqref{eq:NGKr_lowerbd} that $m\le 10r \eps n^2$ (if $m<0$ then we can use~\eqref{eq:NGKr_lowerbd} directly). Therefore we obtain the following bound on $N(G,K_r)$:
\[
N(G, K_r)\le \frac{10r \eps n^2+\eps n^2}{\binom{r}{2}-1}\le 10r \eps n^2.
\]
We delete  $\binom{r}{2}N(G, K_r)$ edges from $G$ making it $K_r$-free. Denote this new graph by $G'$. Clearly,
$e(G')\ge \ex(n,K_r)-\eps n^2-10r\binom{r}{2}\eps n^2$. Now we can apply Theorem~\ref{thm:Simstability} to $G'$ to obtain the desired partition. 
\end{proof}

\subsection{Rainbow stability}
The aim of this section is to prove auxiliary tools that will imply the following  stability result for minimum rainbow $K_r$-decompo\-sitions.
\begin{theorem}\label{thm:stab_phi_rainbow}
 For every $r\ge 3$ and for every $\gamma>0$ there exist $\eps>0$ and $n_0\in\Nat$ such that for every graph $G$ on $n\ge n_0$ vertices with a proper coloring $\chi$ of its edges the following is true. If 
\[
\pr(G,K_r)\ge\ex(n,K_r)-\eps n^2 
\]
then there exists a partition of $V(G)=V_1\dcup\ldots\dcup V_{r-1}$ with $\sum_{i=1}^{r-1}e(V_i)<\gamma n^2$.
\end{theorem}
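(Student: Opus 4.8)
The plan is to exploit the identity $\pr(G,K_r)=e(G)-\bigl(\binom r2-1\bigr)\nr(G,K_r)$. Since $\ex(n,K_r)-\eps n^2\le\pr(G,K_r)\le e(G)$, writing $m:=e(G)-\ex(n,K_r)$ we get $m\ge-\eps n^2$ and
\[
\nr(G,K_r)\le\frac{m+\eps n^2}{\binom r2-1}.
\]
Two elementary facts drive the argument. First, in \emph{any} proper edge-colouring of \emph{any} graph on $n$ vertices, the number of copies of $K_r$ that fail to be rainbow is $O(n^{r-1})$: a non-rainbow $K_r$ contains two vertex-disjoint edges of a common colour, and once one fixes the two ``positions'' of this pair, the monochromatic pair itself lies in a matching (by properness) and so has only $O(n)$ choices after the first of its edges is placed, leaving $O(n^{r-4})$ choices for the remaining vertices. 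Consequently a properly coloured graph with $\omega(n^{r-1})$ copies of $K_r$ contains a rainbow one; in particular, combining this with the counting lemma (Lemma~\ref{lem:counting}), any $r$ clusters that are pairwise $(\eps,d)$-regular span at least one — in fact $\Omega_{d,r}\bigl((n/t)^r\bigr)$ — rainbow copies of $K_r$. Second, we will use a rainbow analogue of Hoi's bound in the supersaturated range: if $e(G)\ge\ex(n,K_r)+\mu n^2$ for a fixed $\mu>0$, then $\nr(G,K_r)\ge\frac{m}{\binom r2-(r-2)}-o(n^2)$; when $r=3$ this is automatic because every triangle is rainbow under a proper colouring, so $\nr(G,K_3)=N(G,K_3)$ and Theorem~\ref{thm:edk}(i) gives the stronger $\tfrac59 m-o(n^2)$.

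With these in hand, fix a small constant $\mu=\mu(\gamma,r)>0$ and split on $m$. If $m\ge\mu n^2$, the rainbow Hoi bound contradicts the upper bound on $\nr(G,K_r)$ above: since $\tfrac1{\binom r2-(r-2)}>\tfrac1{\binom r2-1}$ for $r\ge3$ — strictly, using $\tfrac59$, already for $r=3$ — the quantity $m\bigl(\tfrac1{\binom r2-(r-2)}-\tfrac1{\binom r2-1}\bigr)\ge c\mu n^2$ eventually exceeds the error $o(n^2)+\tfrac{\eps n^2}{\binom r2-1}$ once $\eps$ and $1/n$ are small relative to $\mu$. Hence this range is vacuous, and we may assume $-\eps n^2\le m<\mu n^2$.

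In the remaining range we delete the edges of a maximal packing of edge-disjoint rainbow copies of $K_r$ in $G$; its size is at most $\nr(G,K_r)\le\tfrac{m+\eps n^2}{\binom r2-1}$, so we remove at most $\binom r2\cdot\tfrac{m+\eps n^2}{\binom r2-1}=O\bigl((\mu+\eps)n^2\bigr)$ edges and obtain a properly coloured graph $G_0$ that contains \emph{no} rainbow $K_r$ and satisfies $e(G_0)\ge\ex(n,K_r)-O\bigl((\mu+\eps)n^2\bigr)$. Suppose $G_0$ admits no partition $V=V_1\dcup\dots\dcup V_{r-1}$ with $\sum_i e_{G_0}(V_i)<\tfrac\gamma2 n^2$. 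Apply the regularity lemma (Theorem~\ref{thm:reglemma}) to $G_0$ and pass to the reduced graph $R_0$ on $t$ clusters, keeping the $\eps$-regular pairs of density at least $d$. The usual pull-back estimates show that $R_0$ has at least $\ex(t,K_r)-\delta' t^2$ edges for a small $\delta'$, and that $R_0$ admits no partition of its clusters into $r-1$ classes with fewer than $\gamma' t^2$ edges inside (otherwise $G_0$ would inherit a good partition). By the Erd{\H o}s--Simonovits stability theorem (Theorem~\ref{thm:Simstability}) this forces $K_r\subseteq R_0$, i.e.\ $r$ clusters of $G_0$ pairwise $(\eps,d)$-regular, which by the first fact above means $G_0$ contains a rainbow $K_r$ — a contradiction. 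Therefore $G_0$ does admit such a partition, and restoring the $O\bigl((\mu+\eps)n^2\bigr)$ deleted edges increases $\sum_i e(V_i)$ by at most that amount, so $\sum_i e_G(V_i)<\gamma n^2$ once $\mu,\eps$ are small enough relative to $\gamma$ and to $\delta(\gamma/2,K_r)$. The constants are chosen in the order: $\gamma,r$ given; then $\delta$ from Theorem~\ref{thm:Simstability}; then the regularity parameters $\eps_{\mathrm{reg}},d,t_0$; then $\mu$; then $\eps$; then $n_0$.

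The main obstacle is the rainbow Hoi bound used to kill the dense range. For $r\ge4$ the idea is that the extremal argument behind Theorem~\ref{thm:edk}(ii) removes cliques one at a time, and as long as the current graph has $\ge\ex(n,K_r)+\Omega(n^2)$ edges it has $\Omega(n^r)$ copies of $K_r$ by supersaturation, hence — by the first fact above — a \emph{rainbow} one that may be removed instead; stopping when the edge count first drops below $\ex(n,K_r)+\mu'n^2$ yields $\nr(G,K_r)\ge\tfrac{m-\mu'n^2-O(n)}{\binom r2-(r-2)}$ for arbitrarily small $\mu'$. The delicate point inside this is extracting close to the optimal number of \emph{edge-disjoint} rainbow copies from the $\Omega(n^r)$ available — a greedy estimate loses a constant factor that is too large when $G$ is dense — which is where Pippenger--Spencer (Theorem~\ref{PS-matching}) is applied to the hypergraph whose vertices are the edges of a dense regular $r$-tuple and whose hyperedges are its rainbow copies of $K_r$, with the near-uniformity of the degrees and the smallness of the codegrees certified by the counting lemma and by the concentration inequalities (Theorems~\ref{thm:Chernoff} and~\ref{thm:kimvu}).
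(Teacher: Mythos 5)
Your overall architecture mirrors the paper's: bound $\nr(G,K_r)$ from above via the decomposition identity, use a Hoi-type lower bound on rainbow packings to force $e(G)\le\ex(n,K_r)+\mu n^2$, then delete a maximal rainbow packing and apply regularity plus Erd\H{o}s--Simonovits. The sparse-range half of your argument is sound: your count of $O(n^{r-1})$ non-rainbow copies of $K_r$ in a properly coloured graph is correct (it is essentially the paper's Lemma~\ref{lem:most_rainbow} in global form), and combined with the counting lemma it does show that a rainbow-$K_r$-free, properly coloured, nearly extremal graph is close to $(r-1)$-partite. The $r=3$ case is also fine, since every triangle is automatically rainbow.

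The genuine gap is the ``rainbow Hoi bound,'' which you correctly identify as the main obstacle but do not actually establish. Note first that this step cannot be weakened: a greedy argument that repeatedly finds a rainbow $K_r$ (via supersaturation plus the $O(n^{r-1})$ count) and deletes its $\binom{r}{2}$ edges only yields $\nr(G,K_r)\ge m/\binom{r}{2}-o(n^2)$, and since $\binom{r}{2}>\binom{r}{2}-1$ this never contradicts the upper bound $\nr(G,K_r)\le(m+\eps n^2)/(\binom{r}{2}-1)$; you need the denominator $\binom{r}{2}-(r-2)$. But Hoi's proof of Theorem~\ref{thm:edk}(ii) is not ``remove one clique at a time at cost $\binom{r}{2}-(r-2)$ edges each'' --- the amortized saving of $r-2$ edges per clique comes from structural choices of \emph{which} cliques to pack, and substituting an arbitrary rainbow clique at each step destroys that accounting. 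Your fallback via Pippenger--Spencer also does not close the gap as stated: applying Theorem~\ref{PS-matching} to a whole dense regular $r$-tuple decomposes its edges into rainbow cliques, but a given pair $(V_i,V_j)$ lies in many $r$-tuples, so the rainbow packings extracted from different tuples are not edge-disjoint and cannot simply be added up. This is precisely what the paper's main technical result, Lemma~\ref{lem:almost_edge_rainbow} ($N(G,K_r)\le\nr(G,K_r)+cn^2$), is for: it starts from a \emph{maximum} (non-rainbow) packing, allocates the edges of each regular pair among the overlapping $r$-tuples by a random splitting with weights $\alpha_S$ proportional to the packing's local usage, and then --- because the split pieces have density far below the regularity parameter, so the counting lemma no longer applies to them --- uses the Kim--Vu inequality (Lemma~\ref{lem:sparsify}) to control degrees in the sparsified tuples before invoking Pippenger--Spencer. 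Once you have that lemma, the ordinary Hoi bound transfers to the rainbow setting for free and your argument goes through; without it, the dense range $m\ge\mu n^2$ is not eliminated.
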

Notice that the case $r=3$ is covered by Proposition~\ref{thm:stab_phi_cliques}, as any proper edge coloring colors a triangle with different colors. 

\subsubsection{Proof overview}
As already mentioned, Theorem~\ref{thm:stab_phi_rainbow} is a main contribution of this paper. 
 We fix a collection of edge-disjoint (not necessarily rainbow) copies of $K_r$ of maximum cardinality. The proof proceeds by an application of the regularity lemma of Szemer\'edi~\cite{Sz78}. Then we identify various $\eps$-regular pairs of not too small density that build $r$-partite graphs, where most edge-disjoint  copies of $K_r$ `live'. We need to argue then that we can find roughly that many rainbow copies instead. To do so, we would like to apply 
a Theorem of Pippenger and Spencer~\cite{PS89} (see also Alon and Spencer~[Theorem~4.7.1]\cite{AS00}) to decompose `most' of the edges into edge-disjoint rainbow copies. Before that we appropriately split $\eps$-regular pairs, similarly as is done for example in~\cite{PS07}. This time however, the density of these `split' $\eps$-regular pairs may be much lower than $\eps$, which would make the usual approach 
to apply the counting lemma (and then to apply a Theorem of Pippenger and Spencer~\cite{PS89})   impossible. Our solution is to count the rainbow copies of $K_r$ (via Lemma~\ref{lem:most_rainbow}) before splitting the $\eps$-regular pairs (of sufficiently high density) and only then to split them  randomly. All we need to estimate is then the number of rainbow copies that will be in (random) subgraphs of $\eps$-regular $r$-partite graphs, which we do by applying a concentration result of Kim and Vu~\cite{KimVu00} (more precisely, Lemma~\ref{lem:sparsify} above, which follows by an application of the result from Kim and Vu~\cite{KimVu00}).

%The approach we take here starts as in the usual proofs in~\cite{LO-YP,PS07} by an application of the regularity lemma. To find many  edge-disjoint rainbow copies of $K_r$ is however not possible by utilizing the approaches from~\cite{LO-YP,PS07} for the following reason. The proof of the stability for $\phi(G,H)$ doesn't work when $H=K_r$, which means that we cannot 

\subsubsection{Some auxiliary results}

The following lemma asserts that, in a proper edge coloring,  for most of the edges from an ``$\eps$-regular environment'' most of the cliques $K_r$ that ``sit'' on them are in fact rainbow.
%First we show that in an $\eps$-regular environment most of the edges not only lie in the ``right'' number of copies of some graph $F$ but also that most of these copies must in fact be rainbow.
\begin{lemma}\label{lem:most_rainbow}
For every $r\ge3$, every $\delta>0 $ and $\gamma>0$ there exist an $\eps>0$ and $m_0$ such that the following holds. Let $m\ge m_0$, let $V_1$, \ldots, $V_r$ be vertex-disjoint subsets of size $m$ or $m+1$ of some graph $G$, such that each pair $(V_i,V_j)$ (for $i\neq j\in[r]$) is $\eps$-regular and has density $d_{ij}\ge \delta$. Further let $\chi$ be any proper edge coloring of $G$. Then, for all $i\neq j$, all but at most $4r\eps(m+1)^2$ edges from $(V_i,V_j)$ lie in 
\begin{equation}\label{eq:rainbow_count}
(1\pm\gamma)\left(\prod_{\substack{\{k,\ell\}\in\binom{[r]}{2}\\
\{k,\ell\}\neq\{i,j\}}} d_{k\ell}\right)\prod_{s\in[r],s\neq i,j}|V_s|
\end{equation}
rainbow copies of $K_r$ between the sets $V_1$,\ldots, $V_r$.
\end{lemma}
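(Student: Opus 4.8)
The plan is to derive Lemma~\ref{lem:most_rainbow} from the ordinary counting lemma, Lemma~\ref{lem:counting}, by showing that only a small proportion of the $K_r$'s counted there can fail to be rainbow. Fix $r$, $\delta>0$ and $\gamma>0$. First I would pass to auxiliary parameters: let $\gamma':=\gamma/2$ and apply Lemma~\ref{lem:counting} with $r$, $\delta$ and $\gamma'$ to obtain $\eps_0>0$ and $m_0$; the $\eps$ we output will be $\min(\eps_0,\eps_1)$ for a second small constant $\eps_1$ to be fixed below, and $m_0$ will be enlarged as needed. Throughout, fix $V_1,\dots,V_r$ and a proper colouring $\chi$ as in the statement, and fix a pair $(V_i,V_j)$.

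The heart of the matter is to bound, for a \emph{typical} edge $e=\{u,w\}\in E(V_i,V_j)$, the number of copies $K$ of $K_r$ with one vertex in each $V_s$, containing $e$, that are \emph{not} rainbow. A copy fails to be rainbow only if two of its $\binom r2$ edges receive the same colour. Since $\chi$ is proper, two equally-coloured edges must be vertex-disjoint, so they form a pair of ``parallel'' edges across two of the pairs $(V_a,V_b),(V_c,V_d)$ with $\{a,b\}\cap\{c,d\}=\emptyset$ (one of which may be $\{i,j\}$, i.e. may be forced to contain $e$). I would count non-rainbow copies by first choosing which two edge-slots of $K_r$ collide (a bounded number of choices, $\binom{\binom r2}{2}$), then choosing the first of the two monochromatic edges: if it is not the slot $\{i,j\}$, pick any edge in the relevant regular pair — at most $(m+1)^2$ choices — and if it is $\{i,j\}$ this edge is already fixed to be $e$. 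Crucially, once the colour $c$ of that first edge is chosen, the \emph{second} monochromatic edge is forced to lie in a specified regular pair \emph{and} to have colour $c$; since $\chi$ is proper, the set of edges of colour $c$ is a matching, so inside that regular pair there is at most one vertex of $V_a$ and at most one of $V_b$ incident to a colour-$c$ edge there — hence at most one choice for the second edge. The remaining $r-4$ (or fewer) vertices of the clique are then chosen freely from their classes, at most $\prod_{s}|V_s|$ of them over the relevant index set, and the two forced edges pin down the remaining freedom by at least a factor $(m+1)^{-2}$ relative to the full count $\prod_{s\neq i,j}|V_s|$. Putting this together, the total number of non-rainbow copies on edges of $(V_i,V_j)$ is at most $C_r\,(m+1)\cdot\prod_{s\neq i,j}|V_s|$ for a constant $C_r$ depending only on $r$ (the extra $(m+1)^{-1}$ saving coming from the forced second edge), which is a $O_r(1/m)$ fraction of $e(V_i,V_j)\cdot\prod_{s\neq i,j}|V_s|\ge \delta (m)^2\prod|V_s|$ or so.

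Now I would combine the two estimates. By Lemma~\ref{lem:counting}, all but at most $4r\eps(m+1)^2$ edges of $(V_i,V_j)$ lie in $(1\pm\gamma')\big(\prod_{\{k,\ell\}\neq\{i,j\}}d_{k\ell}\big)\prod_{s\neq i,j}|V_s|$ copies of $K_r$ of the required transversal type. Among those edges, call an edge \emph{bad} if more than $\gamma' \big(\prod d_{k\ell}\big)\prod|V_s|$ of the copies through it are non-rainbow; by the total bound from the previous paragraph and Markov's inequality (the total number of non-rainbow copies is $\le C_r(m+1)\prod|V_s|$, each bad edge accounting for $\ge \gamma'\delta^{\binom r2 -1}\prod|V_s|$ of them), the number of bad edges is at most $C_r(m+1)/(\gamma'\delta^{\binom r2-1}) \le \eps_1 (m+1)^2$ once $m_0$ is large enough, where $\eps_1$ is chosen so that this is absorbed into the allowed defect. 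For every edge that is neither exceptional for the counting lemma nor bad, the number of rainbow copies through it is at least $(1-\gamma')\big(\prod d_{k\ell}\big)\prod|V_s| - \gamma'\big(\prod d_{k\ell}\big)\prod|V_s| = (1-\gamma)\big(\prod d_{k\ell}\big)\prod|V_s|$ and at most the full count $(1+\gamma')\big(\prod d_{k\ell}\big)\prod|V_s|\le(1+\gamma)\big(\prod d_{k\ell}\big)\prod|V_s|$, as desired; the set of edges we discard has size at most $4r\eps_0(m+1)^2+\eps_1(m+1)^2\le 4r\eps(m+1)^2$ after reassembling constants (adjusting $\eps$ downward and $m_0$ upward).

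The main obstacle — and the only place any genuine idea is needed — is the counting step in the middle paragraph: one has to see that properness of $\chi$ turns ``the two collided edges have a common colour'' into ``the second edge is essentially unique given the first'', which is exactly what yields the saving of a full factor of $m$ over the total number of transversal $K_r$'s and makes the non-rainbow copies negligible. Everything else (choice of constants, Markov, bookkeeping of the exceptional edge sets from the counting lemma) is routine. One should be mildly careful in the boundary cases $r=3,4$: for $r=3$ every transversal triangle is automatically rainbow, so there is nothing to prove beyond Lemma~\ref{lem:counting}; for $r=4$ the two colliding slots are necessarily a perfect matching of $K_4$ and at least one of them may coincide with $\{i,j\}$, but the counting argument above already accounts for that case and the bound $C_r(m+1)\prod_{s\neq i,j}|V_s|$ still holds (with $\prod_{s\neq i,j}|V_s|$ being an empty product equal to $1$ when $r=4$ and only two classes besides $i,j$... in fact for $r=4$ there are $r-2=2$ such classes, so no degeneracy), so the same conclusion follows.
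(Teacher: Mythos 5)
Your overall strategy is the same as the paper's: run the counting lemma with $\gamma/2$ and then show that properness of $\chi$ makes the non-rainbow transversal copies of $K_r$ a negligible fraction. But the step you yourself identify as ``the only place any genuine idea is needed'' contains a false claim. You assert that once the first of the two equally-coloured edges (and hence the colour $c$) is fixed, the second one is essentially unique, ``since $\chi$ is proper, the set of edges of colour $c$ is a matching, so inside that regular pair there is at most one vertex of $V_a$ and at most one of $V_b$ incident to a colour-$c$ edge''. That is not true: a colour class is a matching on the whole vertex set, and its restriction to the bipartite pair $(V_c,V_d)$ can still contain up to $\min(|V_c|,|V_d|)\approx m$ edges (e.g.\ a perfect matching between $V_c$ and $V_d$ entirely in colour $c$ is consistent with properness). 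So the second edge has up to $\Theta(m)$ choices, not one. The correct consequence of properness, which is what the paper uses, is only that each colour class has at most $r(m+1)/2$ edges inside $V_1\cup\dots\cup V_r$, and (for the sub-case where a collided edge is incident to $v_i$ or $v_j$) that a fixed vertex has at most one edge of each colour. The paper then does a per-edge case analysis (collision involving the edge $v_iv_j$ itself, involving one of its endpoints, or disjoint from both) showing each fixed edge lies in only $o(m^{r-2})$ non-rainbow copies, which is all that is needed.

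Your argument is repairable, because the \emph{total} bound you actually feed into the Markov step, $C_r(m+1)\prod_{s\neq i,j}|V_s|=O(m^{r-1})$, is correct once the second edge is counted as $O(m)$ choices rather than $1$ (pairs of disjoint same-coloured edges number $\sum_c\binom{m_c}{2}=O(m^3)$, and each pins four vertices of the copy). However, note a second, smaller defect created by your Markov detour: you discard the counting lemma's exceptional set \emph{plus} an extra set of ``bad'' edges, and the sum cannot literally be pushed back under $4r\eps(m+1)^2$ by ``adjusting $\eps$ downward'', since the hypothesis only provides $\eps$-regularity for the final $\eps$. The paper sidesteps this entirely: its per-edge bound of $o(m^{r-2})$ holds for \emph{every} edge, so no edges beyond the counting lemma's exceptional set need to be discarded. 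I recommend you replace the uniqueness claim by the per-edge case analysis; this both fixes the error and removes the need for Markov's inequality.
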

\begin{proof}Let $\eps>0$ and $m_0$ be asserted by the counting lemma, Lemma~\ref{lem:counting}, for supplied parameters $r$, $\delta$ and $\gamma/2$ (instead of $\gamma$). We clearly may assume that $r\ge 4$.

 Let $G$ be a graph and let $V_1$, \ldots, $V_r$ be its vertex-disjoint subsets of size $m$ or $m+1$. 
Then, clearly, it holds that all but at most $4r\eps(m+1)^2$ edges from $(V_i,V_j)$ lie in 
\begin{equation}\label{eq:counting}
(1\pm\gamma/2)\left(\prod_{\substack{\{k,\ell\}\in\binom{[r]}{2}\\
\{k,\ell\}\neq\{i,j\}}} d_{k\ell}\right)\prod_{s\in[r],s\neq i,j}|V_s|
\end{equation}
 many copies of $K_r$ between the sets $V_1$,\ldots,$V_r$. We refer to such edges as \emph{good}.

We fix an arbitrary proper edge coloring $\chi$ of $G$.  Let $v_iv_j\in E(V_i,V_j)$ be a good edge and let $K$ be a copy of $K_r$ between $V_1$,\ldots, $V_r$, which contains $v_iv_j$. 
Assume that $K$ is not rainbow and let the vertex set of $K$ be $\{v_1,\ldots,v_r\}$. Then there exist four distinct  indices $s_1$,\ldots, $s_4\in[r]$ with $\chi(v_{s_1}v_{s_2})=\chi(v_{s_3}v_{s_4})$. In the following we estimate the number of such non-rainbow copies of $K_r$, where we will make use of the fact that in a proper edge coloring every color class forms a matching and there are thus at most $r(m+1)/2$ edges of the same color in $G$. We distinguish three cases:
\begin{enumerate}
\item one of the edges $v_{s_1}v_{s_2}$, $v_{s_3}v_{s_4}$ (say $v_{s_1}v_{s_2}$) equals to $v_iv_j$; %then, since the graph $G[V_1\ldots V_r]$ contains at most $r(m+1)$ vertices there are at most $r(m+1)/2$ choices for the edge $v_{s_3}v_{s_4}$ because $\chi(v_iv_j)=\chi(v_{s_3}v_{s_4})$ and in a proper edge coloring every color class forms a matching. Therefore, there are at most $\tfrac{r(m+1)}{2}(m+1)^{r-4}=o(m^{r-2})$ such non-rainbow copies $K$.
there are at most $r(m+1)/2$ choices for the edge $v_{s_3}v_{s_4}$ and, therefore, at most $\tfrac{r(m+1)}{2}(m+1)^{r-4}=o(m^{r-2})$ such non-rainbow copies $K$;
\item at least one of the edges $v_{s_1}v_{s_2}$, $v_{s_3}v_{s_4}$ (say $v_{s_1}v_{s_2}$) 
is incident to $v_i$ or $v_j$; then as $v_i$ and $v_j$ each has  at most $(r-2)(m+1)$ neighbours among $V_\ell$s ($\ell\neq i,j$) this number is an upper bound on the number of possible colors with $\chi(v_{s_1}v_{s_2})=\chi(v_{s_3}v_{s_4})$. Therefore, there are at most $(r-2)(m+1)(m+1)^{r-4}=o(m^{r-2})$ non-rainbow copies $K$ where both edges 
$v_{s_1}v_{s_2}$, $v_{s_3}v_{s_4}$ are incident to $v_i$, $v_j$ respectively. Moreover, the number of non-rainbow copies $K$ with $\{v_{s_3},v_{s_4}\}\cap\{v_i,v_j\}=\emptyset$ is at most $(r-2)(m+1) \tfrac{r(m+1)}{2} (m+1)^{r-5}=o(m^{r-2})$. Summing up, this gives at most $o(m^{r-2})$ non-rainbow copies in this case.
\item both edges $v_{s_1}v_{s_2}$, $v_{s_3}v_{s_4}$ are disjoint from $\{v_i,v_j\}$. We write $m_i$ for the number of edges 
from $G[V_1\ldots V_r]$ in color $i$ and observe that $m_i\le \tfrac{r(m+1)}{2}$. The  number of non-rainbow copies $K$ is in this case at most 
\[
\sum_{i} m_i^2 m^{r-6}\le \tfrac{r(m+1)}{2} m^{r-6}\sum_i m_i\le \tfrac{r(m+1)}{2} m^{r-6}\binom{m(r+1)}{2}=o(m^{r-2}).
\]
\end{enumerate}
Thus, it follows that the total number of non-rainbow copies of $K_r$ that contain $v_iv_j$ is $o(m^{r-2})$. Since the number of copies of $K_r$ is given by~\eqref{eq:counting} and each $|V_\ell|\in\{m,m+1\}$ we 
immediately infer~\eqref{eq:rainbow_count} for $m_0$ sufficiently large.
%immediately infer~\eqref{eq:counting}.
\end{proof}

Later in our proof we will ``sparsify'' randomly our $\eps$-regular pairs such that the densities of these pairs are much below $\eps$.  As a consequence, we will not be able to count within these sparse pairs directly (as the density might be much less than the regularity parameter $\eps$). Instead we count before sparsification and the following lemma asserts that as many  rainbow copies remain as we would expect. Given an $r$-partite graph $G$ with the classes $V_1$,\ldots,$V_r$, and $p_{ij}\in[0,1]$ for all $\{i,j\}\in\binom{[r]}{2}$, we denote by $G_{(p_{ij})}$ the random subgraph of $G$ where each edge from $(V_i,V_j)$ is included with probability $p_{ij}$ independently of the other edges. 

Let %$v_iv_j$ 
$v_sv_t$ be an edge from %$G[V_i,V_j]$ 
$G[V_s,V_t]$ and let $K$ be a copy of %$G$ 
$K_r$ that contains %$v_iv_j$. 
$v_sv_t$. We then say that the edge %$v_iv_j$ 
$v_sv_t$ {\it closes} a copy of $K$ in $G_{(p_{ij})}$ if all edges of %$E(K)\setminus\{v_iv_j\}$ 
$E(K)\setminus\{v_sv_t\}$ lie in $G_{(p_{ij})}$.
\begin{lemma}\label{lem:sparsify}
 For $r\ge 3$, $\gamma>0$ and $\eta>0$ there exists a $\beta>0$ and $m_0$ such that the following holds. Let $G$ be an $r$-partite  graph with classes $V_1$,\ldots,$V_r$, each of size $m$ or $m+1$, where $m\ge m_0$. Let $v_1v_2$ be an edge from $(V_1,V_2)$ and let $\cK$ be a family of some $\delta m^{r-2}$ copies of $K_r$ in $G$ that contain $v_1v_2$. Then for any
  sequence of  $p_{ij}>0$ ($i\neq j\in[r]$), such that 
  %$\delta m^{r-2}\prod_{\{k,\ell\}\in\binom{[r]}{2}} p_{ij}\ge (m+1)^{r-3+\gamma}$, 
\[  
\delta m^{r-2}\prod_{\substack{\{k,\ell\}\in\binom{[r]}{2}, \\
\{k,\ell\}\neq\{1,2\}}} p_{k\ell}\ge (m+1)^{r-3+\gamma},
\] 
it holds that the probability that the edge $v_1v_2$ does not close in $G_{(p_{ij})}$ 
%  \[
%   (1\pm \eta)\left(\prod_{\{k,\ell\}\in\binom{[r]}{2}, \{k,\ell\}\neq\{i,j\}} p_{ij}\right) \delta m^{r-2}
%  \] 
  \[
   (1\pm \eta)\left(\prod_{\substack{\{k,\ell\}\in\binom{[r]}{2}\\ 
\{k,\ell\}\neq\{1,2\}}} p_{k\ell}\right) \delta m^{r-2}
  \] 
many copies of $K_r$  from $\cK$ is at most $\exp(-m^{\beta})$.
\end{lemma}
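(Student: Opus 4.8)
\textbf{Plan for the proof of Lemma~\ref{lem:sparsify}.}
The plan is to fix the edge $v_1v_2$ and the family $\cK$ of $\delta m^{r-2}$ copies of $K_r$ through $v_1v_2$, and to count the number $X$ of copies in $\cK$ that $v_1v_2$ closes in $G_{(p_{ij})}$. For a copy $K\in\cK$ on vertices $\{v_1,v_2,w_3,\ldots,w_r\}$, the event ``$v_1v_2$ closes $K$'' is the conjunction that all $\binom{r}{2}-1$ edges of $E(K)\setminus\{v_1v_2\}$ survive the random sparsification; writing $X_e$ for the indicator that edge $e$ survives (so the $X_e$ are mutually independent, with $\EE(X_e)=p_{ij}$ when $e\in(V_i,V_j)$), we have $X=\sum_{K\in\cK}\prod_{e\in E(K)\setminus\{v_1v_2\}}X_e$. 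This is precisely a polynomial of degree $k:=\binom{r}{2}-1$ in independent binomial variables, so the Kim--Vu concentration inequality, Theorem~\ref{thm:kimvu}, applies. By linearity, $\EE(X)=\bigl(\prod_{\{k,\ell\}\neq\{1,2\}}p_{k\ell}\bigr)\,\delta m^{r-2}$, which is the target quantity; the hypothesis on the $p_{ij}$ guarantees $\EE(X)\ge (m+1)^{r-3+\gamma}$, i.e.\ $\EE(X)$ is polynomially large in $m$.

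The heart of the argument is to bound the partial-derivative expectations $E_i=\max_{|A|=i}\EE\bigl(\sum_{K\in\cK,\,A\subseteq E(K)}\prod_{e\in E(K)\setminus(A\cup\{v_1v_2\})}X_e\bigr)$ appearing in Theorem~\ref{thm:kimvu}, and in particular $E'=\max_i E_i$. First I would observe that $E_1\le \EE(X)$ trivially (a single prescribed edge only constrains the sum). For $i\ge 2$: fixing a set $A$ of $i$ edges forces at least a certain number of the vertices $w_3,\ldots,w_r$ to be determined — concretely, $i$ edges touch at least some number of vertices, and once those vertices are pinned the number of copies $K\in\cK$ containing $A$ is at most $m^{(\text{free vertices})}$; multiplying by the surviving product of $p$'s over the at most $k-i$ remaining edges gives a bound of the form $E_i\le m^{r-2-c_i}\prod(\text{some }p_{k\ell})$ for an appropriate $c_i\ge 1$. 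The point is that each of these is smaller than $\EE(X)$ by a factor of at least $m^{\Omega(1)}$ once $\EE(X)\ge m^{r-3+\gamma}$: the worst case, $A$ consisting of a single edge inside the "link" plus the loss of one vertex, still costs a polynomial factor, and the gain from the missing $p_{k\ell}$'s cannot compensate since each $p_{k\ell}\le 1$. Hence $E'\le \EE(X)/m^{\gamma'}$ for some $\gamma'>0$ depending on $\gamma$ and $r$, and so $E=\max(\EE(X),E')=\EE(X)$ and $(EE')^{1/2}\le \EE(X)/m^{\gamma'/2}$.

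With these estimates in hand, I apply Theorem~\ref{thm:kimvu} with $\lambda:=m^{\rho}$ for a small $\rho>0$ chosen so that $a_k(EE')^{1/2}\lambda^k\le \eta\,\EE(X)$; this is possible precisely because $(EE')^{1/2}\le \EE(X)\,m^{-\gamma'/2}$, so it suffices to take $k\rho<\gamma'/2$, and then $a_k(EE')^{1/2}\lambda^k\le a_k m^{-\gamma'/2+k\rho}\EE(X)=o(\EE(X))\le\eta\EE(X)$ for $m$ large. The failure probability is then at most $d_k e^{-\lambda}n^{k-1}\le d_k e^{-m^{\rho}}m^{k-1}\le \exp(-m^{\beta})$ for any $\beta<\rho$ and $m$ sufficiently large, since the polynomial factor $m^{k-1}$ is swamped by $e^{-m^{\rho}}$. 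Taking $\beta$ slightly below $\rho$ and $m_0$ large enough to absorb all the "for $m$ large" clauses completes the proof. The main obstacle, and the only place requiring genuine care, is the case analysis for the $E_i$: one must verify that no choice of a small fixed edge set $A$ can make $E_i$ as large as $\EE(X)$, i.e.\ that pinning $i$ edges always kills strictly more vertex-freedom (a factor $m$) than it saves in sparsification-probability — this is where the polynomial slack $m^\gamma$ in the hypothesis is used, and it mirrors the standard Kim--Vu bookkeeping for counting cliques in random graphs.
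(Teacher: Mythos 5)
Your proposal is correct and follows essentially the same route as the paper: apply the Kim--Vu inequality (Theorem~\ref{thm:kimvu}) to $X=\sum_{K\in\cK}\prod_{e\in E(K)\setminus\{v_1v_2\}}X_e$, bound $E'$ by noting that fixing any edge $f\neq v_1v_2$ pins an extra vertex so at most $(m+1)^{r-3}$ copies remain (the paper uses exactly this one-line bound in place of your case analysis over $i$), and use the hypothesis $\EE(X)\ge(m+1)^{r-3+\gamma}$ to get $\lambda\ge m^{\Omega(\gamma)}$. The only caveat is your parenthetical claim that $E_1\le\EE(X)$ ``trivially because a prescribed edge only constrains the sum'' — dropping the factor $p_f$ from the product could increase the expectation, so one must (as your subsequent general-$i$ bookkeeping in fact does) bound the restricted count by $(m+1)^{r-3}$ and the remaining product of $p$'s by $1$.
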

\begin{proof}
For every edge $e\in E(G)$ let $X_e$ be the indicator random variable whether the edge $e$ is in $G_{(p_{ij})}$. 
By the definition, we have $\PP(e\in G_{(p_{ij})})=p_{ij}$ for $e\in E_G(V_i,V_j)$. Further we define the random variable 
$X$ that counts the number of copies $K$ of $K_r$ from $\cK$,  
% so that $E(K)\setminus\{v_1v_2\}\subseteq E(G_{(p_{ij})})$ as follows:
that are closed by $v_1v_2$, as follows: 
\[
X:=\sum_{\substack{
K \in \cK\\v_1v_2\in K}} 
\prod_{\substack{e\in E(K)\\
e\neq \{v_1v_2\}}} X_e.
\]
We clearly have 
\begin{equation}\label{eq:low_EE}
\EE(X)=|\cK| \prod_{\substack{\{k,\ell\}\in\binom{[r]}{2}\\
\{k,\ell\}\neq\{1,2\}}} p_{k\ell}\ge (m+1)^{r-3+\gamma},
\end{equation}
where the second inequality is an assumption of the lemma. We set $k:=\binom{r}{2}-1$ and 
 we simply estimate the quatity
\[
E':=\max_{i\in[k]}\max_{\substack{A\subseteq E(G)\\
|A|=i}} \EE\left(\sum_{\substack{K\in \cK\\
v_1v_2\in K\\
A\subseteq E(K)\setminus\{v_1v_2\}}} \prod_{e\in E(K)\setminus (A\cup\{v_1v_2\})}X_e\right)
\]
 by $(m+1)^{r-3}$, since by choosing some edge $f\neq v_1v_2$ there remain at most 
$(m+1)^{r-3}$ possible copies of $K_r$ in $G$ that contain $v_1v_2$ and $f$ as edges.

%$a_k=8^kk!^{1/2}$, $d_k=2e^2$,
We may now apply Kim-Vu polynomial concentration (Theorem~\ref{thm:kimvu}) with $E=\max(\EE(X),E')=\EE(X)$ and $E'=(m+1)^{r-3}$ as follows 
(with $k=\binom{r}{2}-1$, $a_k=8^kk!^{1/2}$, $d_k=2e^2$):
\[
\PP\left[\left|X-\EE(X)\right|>a_k (EE')^{1/2}\lambda^k\right]<d_ke^{-\lambda} \left(\binom{r}{2}(m+1)^2\right)^{k-1}.
\]
By choosing $\lambda=\left(\frac{\eta \EE(X)}{a_k (EE')^{1/2}}\right)^{1/k}$ we have 
$\PP\left[\left|X-\EE(X)\right|>\eta \EE(X)\right]<d_ke^{-\lambda}(\binom{r}{2}(m+1)^2)^{k-1}$. 
Using~\eqref{eq:low_EE} we obtain the following lower bound on $\lambda$, which will be sufficient for our purposes:
\[
\lambda\ge \left(\frac{\eta (m+1)^{(r-3+\gamma)/2}}{8^kk!^{1/2} (m+1)^{(r-3)/2}}\right)^{1/k}\ge m^{\gamma/(3k)}.
\]
Thus, we may estimate the probability $\PP\left[\left|X-\EE(X)\right|>\eta \EE(X)\right]<e^{-m^{\gamma/(4k)}}$, 
setting $\beta=\gamma/(4k)$ and choosing $m_0$ sufficiently large.
\end{proof}

\begin{comment}
\begin{lemma}\label{lem:all_rainbow}
For all $r\ge3$ and any $c>0$ there exists an $n_0$ such that the following holds. In any proper edge coloring of a graph $G$ on $n\ge n_0$ vertices all but at most $cn^r$ copies are rainbow.
\end{lemma}
\begin{proof}
Assume that there exists a graph $G$ on $n\ge n_0$ vertices and a proper edge coloring of its edges, such that more than $cn^r$ copies are not rainbow. We define an auxiliary $r$-uniform hypergraph $H$ on the vertex set $V(G)$, where each hyperedge corresponds to a vertex set of some non-rainbow copy of $K_r$ in $G$. The hypergraph $H$ contains thus at least $cn^r$ hyperedges and, by a theorem of Erd\H{o}s~\cite{} (on extremal number of $r$-partite $r$-uniform hypergraphs), $H$ must contain complete $r$-uniform $r$-partite hypergraph $F$ with classes of size at least $r^3$ each. This hypergraph $F$ corresponds to a subgraph $G'\subseteq G$, where $G'$ is isomorphic to $K_r(r^3)$. By Lemma~{lem-proper}, $G'$ must contain a rainbow copy of $K_r$, which is a contradiction to the definition of the edge-set of $H$. 
\end{proof}
\end{comment}

\subsubsection{Main lemma}
The following lemma shows that, for any graph $G$ on $n$ vertices and any proper edge coloring $\chi$, the numbers (of edge-disjoint copies) $N(G,K_r)$ and $\nr(G,K_r)$ differ by at most $o(n^2)$.

\begin{lemma}\label{lem:almost_edge_rainbow}
For all $r\ge3$ and any $c>0$ there exists an $n_0$ such that the following holds. In any proper edge coloring $\chi$ of a graph $G$ on $n\ge n_0$ vertices we have $N(G,K_r)\le\nr(G,K_r)+cn^2$.
\end{lemma}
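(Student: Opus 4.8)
The plan is to apply the regularity lemma to $G$, pass to the reduced structure, and show that almost all of a maximum collection of edge-disjoint (not necessarily rainbow) copies of $K_r$ can be replaced by edge-disjoint \emph{rainbow} copies, losing only $o(n^2)$ of them. First I would fix a maximum family $\mathcal{F}$ of $N(G,K_r)$ edge-disjoint copies of $K_r$ in $G$ and apply Theorem~\ref{thm:reglemma} with a small parameter $\eps$ and large $t_0$ to obtain an $\eps$-regular equitable partition $V=V_1\dcup\ldots\dcup V_t$. By a standard clean-up, the copies of $K_r$ in $\mathcal{F}$ that use an edge inside a cluster, an edge across an irregular pair, or an edge across a pair of density below some threshold $\delta$, number only $o(n^2)$ in total; so all but $o(n^2)$ members of $\mathcal{F}$ are ``transversal'' copies sitting on $r$-tuples of clusters all of whose pairwise densities exceed $\delta$ and are $\eps$-regular. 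Group these good copies by the $r$-set of clusters they occupy; for each such $r$-set $S=\{V_{i_1},\dots,V_{i_r}\}$ let $\mathcal{F}_S\subseteq\mathcal{F}$ be the copies living on $S$, and note $|\mathcal{F}_S|\le e(G[S])/\binom{r}{2}$-ish and in any case $\sum_S|\mathcal{F}_S|=N(G,K_r)-o(n^2)$.

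The heart of the argument is, for each good $r$-set $S$, to find within the $r$-partite graph $G[V_{i_1},\dots,V_{i_r}]$ a family of edge-disjoint \emph{rainbow} $K_r$'s of size at least $|\mathcal{F}_S|-o(m^2)$, where $m=n/t$. For this I would set up, on the edge set of $G[S]$, the auxiliary $\binom{r}{2}$-uniform hypergraph $\mathcal{H}_S$ whose hyperedges are the \emph{rainbow} transversal copies of $K_r$ in $G[S]$. By Lemma~\ref{lem:most_rainbow}, for every good edge $e$ of $G[S]$ the number of rainbow $K_r$'s through $e$ is $(1\pm\gamma)\big(\prod_{\{k,\ell\}\ne\{i,j\}}d_{k\ell}\big)\prod_{s}|V_s|$, i.e.\ all these degrees are equal up to a $(1\pm\gamma)$ factor and up to $o(m^2)$ exceptional edges; moreover the codegree of two edges is at most $(m+1)^{r-4}=o(D)$ where $D$ is the common degree. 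Hence $\mathcal{H}_S$ satisfies the hypotheses of the Pippenger--Spencer theorem, Theorem~\ref{PS-matching}, which yields a matching in $\mathcal{H}_S$ — that is, a family of edge-disjoint rainbow $K_r$'s in $G[S]$ — covering all but a $c_0$-fraction of the edges of $G[S]$, hence of size at least $(1-c_0)e(G[S])/\binom{r}{2}\ge |\mathcal{F}_S|-c_0'm^2$ after choosing $c_0$ small. Since these rainbow families live on pairwise vertex-\emph{sharing} but edge-\emph{disjoint} blocks $G[S]$ for distinct $S$ — wait, distinct good $r$-sets $S$ may share clusters, so their edge sets overlap; the cleanest fix is to process the good $r$-sets one at a time, each time deleting the edges used by the rainbow copies just selected before moving to the next $S$, so that the rainbow copies produced across all $S$ are globally edge-disjoint. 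The accumulated loss over the at most $\binom{t}{r}$ sets $S$ is $\binom{t}{r}\cdot o(m^2)$, which is $o(n^2)$ for $t\le T_0$ fixed.

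Putting it together: $\nr(G,K_r)\ge \sum_S\big(|\mathcal{F}_S|-o(m^2)\big)\ge N(G,K_r)-o(n^2)$, which is exactly $N(G,K_r)\le\nr(G,K_r)+cn^2$ for $n$ large, after choosing $\eps,\delta,\gamma,c_0$ small enough in terms of $c$ and $r$ and invoking the quantifier chains of Lemmas~\ref{lem:counting}, \ref{lem:most_rainbow} and Theorem~\ref{PS-matching}. The main obstacle is the bookkeeping of the quantifiers — the thresholds $\eps$ (regularity), $\delta$ (minimum density), $\gamma$ (counting error) and $c_0$ (Pippenger--Spencer loss) must be chosen in the right order so that every application is legal and the total error stays below $cn^2$ — together with the need to handle overlapping cluster-$r$-sets by a sequential deletion argument rather than treating the blocks as independent; one must also check that Lemma~\ref{lem:sparsify} is in fact not needed here (it is used later, for the sparsified pairs in Theorem~\ref{thm:stab_phi_rainbow}), and that the simpler density-at-least-$\delta$ situation of the present lemma is covered directly by Lemma~\ref{lem:most_rainbow} plus Pippenger--Spencer.
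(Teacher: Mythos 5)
Your overall strategy (regularity, Lemma~\ref{lem:most_rainbow}, Pippenger--Spencer) is the same as the paper's, but two steps fail as written. First, you apply Theorem~\ref{PS-matching} directly to the hypergraph of rainbow copies of $K_r$ in $G[S]$. Its hypothesis (1) requires almost all vertices (i.e.\ edges of $G[S]$) to have degree $(1\pm c_1)D$ for a \emph{single} $D$; but an edge of $(V_i,V_j)$ lies in roughly $\prod_{\{k,\ell\}\neq\{i,j\}}d_{k\ell}\cdot m^{r-2}$ rainbow copies, a quantity that varies with $\{i,j\}$ by a constant factor whenever the densities inside $S$ are unequal. So the hypergraph is not near-regular and Pippenger--Spencer does not apply; indeed its conclusion, $(1-c_0)e(G[S])/\binom{r}{2}$ edge-disjoint copies, would exceed $\min_{ij}e(V_i,V_j)$ (the average pair size exceeds the minimum), which is impossible for a family of transversal edge-disjoint $K_r$'s. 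Second, and more seriously, the sequential-deletion fix for overlapping $r$-sets does not deliver what you claim. Once the first $S$ containing a pair $\{i,j\}$ has been processed, all but a small (and uncontrolled) remnant of $E(V_i,V_j)$ has been deleted; for every later $S'\supseteq\{i,j\}$ that pair is no longer $\eps$-regular of density at least $\delta$, so neither Lemma~\ref{lem:most_rainbow} nor Theorem~\ref{PS-matching} can be invoked for $S'$, and the promised ``$|\mathcal{F}_{S'}|-o(m^2)$ rainbow copies for each $S'$'' is unobtainable (consider $|\mathcal{F}_{S}|$ tiny and $|\mathcal{F}_{S'}|$ of order $m^2$ on a shared pair). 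A global surplus/deficit accounting could conceivably rescue the total count, but you neither formulate nor prove it, and it is exactly the nontrivial point of the lemma.

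The paper resolves both problems at once with the weights $\alpha_S$ of \eqref{eq:def_alpha_S}: each pair $(V_i,V_j)$ is split \emph{randomly} among the $r$-sets $S\supseteq\{i,j\}$ with probabilities $\alpha_S/d_{ij}$, chosen so that $\sum_{S\supseteq\{i,j\}}\alpha_S\le d_{ij}$ and $|\cK'(S)|\le\alpha_S|V_i||V_j|$. The resulting subgraphs $G_S$ are genuinely edge-disjoint, and inside each $G_S$ every pair has the \emph{same} density $\approx\alpha_S$, which restores the near-regularity needed for Theorem~\ref{PS-matching}. The price is that $\alpha_S$ may be far below $\eps$, so one cannot count copies inside $G_S$ by regularity alone; this is precisely why Lemma~\ref{lem:sparsify} (Kim--Vu concentration) \emph{is} needed in this proof --- contrary to your closing remark --- namely to transfer the rainbow counts of Lemma~\ref{lem:most_rainbow}, obtained before the split, to the sparsified graphs $G_S$. (A minor additional slip: the codegree of two edges in the rainbow hypergraph is at most $m^{r-3}$, not $m^{r-4}$, though this is still $o(D)$.)
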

\begin{proof}
Let $G$ be given, let $\chi$ be some fixed proper edge coloring of $G$ and let $\cK$ be a family of $N(G,K_r)$ edge-disjoint copies (not necessarily rainbow) of $K_r$ in $G$.

We set $\delta=c/3$ and $\xi=\delta/4$. We then choose $c_0=\xi$, $s=\binom{r}{2}$ and $k=2\delta^{-\binom{r}{2}+1}$, and let  $c_1=c_1(s,c_0, k)>0$ and $d_0 = d_0(s, c_0, k)$  be as asserted by Theorem~\ref{PS-matching} on input $s$, $k$ and $c_0$.  
 We set $\gamma=c_1/2$ and let $\eps'$ be as asserted by Lemma~\ref{lem:most_rainbow} on input $r$, $\delta$ and $\gamma$. Finally we choose $\eps:=\min\{\eps',\delta/4, \delta c_1/(63r)\}$ and $t_0=2/\eps$. Finally, let $T$ be as asserted by the regularity lemma (Theorem~\ref{thm:reglemma}) on input $\eps$ and $t_0$. We will also assume throughout the proof that $n$ is sufficiently large, so that all asymptotic estimates hold.

\textbf{\emph{An application of the regularity lemma.}} We apply regularity lemma to $G$ with (carefully chosen) parameters $\eps>0$ and $t_0$ (lower bound on the number of clusters). We obtain an $\eps$-regular partition of $V(G)$ into $V_1$,\ldots, $V_t$ where $t\le T=T(\eps,t_0)$. We define a cluster graph $R$ with the vertex set $\{V_i\colon i\in[t]\}$, where $ij\in E({R})$ whenever the density $d(V_i,V_j)\ge \delta$.  For convenience, we let $d_{ij}$ denote $d(V_i,V_j)$ for the remainder of the proof. First observe that the number of copies of $K_r$ from $\cK$ with at least one edge not from the pairs $(V_i,V_j)$ with $ij\in E({R})$ is at most
\begin{multline*}
\sum_{i=1}^t\binom{|V_i|}{2}+\sum_{ij\not\in E({R}), (V_i,V_j)\text{ is not }\eps\text{-regular}}|V_i||V_j|+
\sum_{ij\not\in E({R}), d_{ij}<\delta}e(V_i,V_j)\le\\
n^2/t_0+\eps n^2+\delta n^2/2<\delta n^2.
\end{multline*}
Thus, all but at most $\delta n^2$ copies from $\cK$ are completely contained within $\eps$-regular pairs of density at least $\delta$, and we denote such set of copies by $\cK'\subseteq \cK$ and identify these copies with their vertex sets.

\textbf{\emph{Calculating $\alpha_S$.}} 
Observe now that a copy of $K_r$ from $\cK'$ must lie between some $r$ clusters that form a copy of $K_r$ in $R$. We write $\supp(T)$ for $\{i\colon V_i\cap T\neq\emptyset\}$, the \emph{support} of $T$. For $S\in \binom{V({R})}{r}$, we denote by $\cK'(S)$ those copies $F$ of $K_r$ from  $\cK'$ with  $S=\supp(V(K))$. Thus, $|\cK'|=\sum_{S\in\binom{V({R})}{r}} |\cK'(S)|$. Of course, if the graph $R[S]$ is not complete for some $S$ then $|\cK'(S)|=0$.
 
For an $r$-element set   $S\subseteq V({R})$ with at least one copy $F\in \cK'$ with $\supp(F)=S$ we define the weight $\alpha_S$ as follows (for other $r$-element sets $S$ we set $\alpha_S=0$): 
\begin{equation}\label{eq:def_alpha_S}
\alpha_S:=\min_{\{i,j\}\in \binom{S}{2}}\frac{d_{ij}|\cK'(S)|}{|\{F\colon F\in\cK', i,j\in \supp(V(F)) \}|}.
\end{equation}

 Observe that for every $ij\in E({R})$ and $S\in\binom{V({R})}{r}$ with $S\supseteq \{i,j\}$ we have $\alpha_S\le d_{ij}$. Consider now an arbitrary $ij\in E({R})$ with $ |\{F\colon F\in\cK', i,j\in \supp(V(F)) \}|\neq 0$. From the equation 
 \begin{equation}\label{eq:split_cliques}
 |\{F\colon F\in\cK', i,j\in \supp(V(F)) \}|=\sum_{S\in\binom{V({R})}{r}\colon S\supseteq \{i,j\}}|\cK'(S)|
 \end{equation}
 we infer
 \[
 \sum_{S\colon S\supseteq \{i,j\}}\alpha_S\le 
 \sum_{S\colon S\supseteq \{i,j\}}\frac{d_{ij}|\cK'(S)|}{|\{F\colon F\in\cK', i,j\in \supp(V(F)) \}|}= 
 d_{ij}.
 \] 
 Furthermore, every copy from $\cK'$ whose support contains $i$ and $j$ uses exactly one distinct edge from 
 $E(V_i,V_j)$ and thus $|\{F\colon F\in\cK', i,j\in \supp(V(F)) \}|\le d_{ij} |V_i||V_j|$. We let $i$ and $j$ be the elements of $S$ that yield the value of $\alpha_S$ in~\eqref{eq:def_alpha_S} and conclude with~\eqref{eq:split_cliques}:
 \begin{equation}\label{eq:bound_alpha_cK}
 |\cK'(S)|= \frac{\alpha_S}{d_{ij}} |\{F\colon F\in\cK', i,j\in \supp(V(F)) \}|\le \alpha_S |V_i||V_j|.
 \end{equation}
Roughly speaking, the values $\alpha_S$ will tell us below where we have to look for \emph{many} 
edge-disjoint rainbow copies of $K_r$.
 
% On the other hand, we have $|\cK'(S)|\le \alpha_S \lceil n/t\rceil^2$ for every $S$, since every copy from $\cK'$ whose support contains $i$ and $j$ uses exactly one distinct edge from $E(V_i,V_j)$.

\textbf{\emph{Partitioning the edges of $\eps$-regular pairs.}} Now  we partition randomly every pair $G[V_i,V_j]$ into bipartite subgraphs.  We concentrate only on copies of $\cK'$ from graphs $G[\cup_{i\in S}V_i]$ with 
 $\alpha_S\ge\delta/T^r$.  Since there are $\binom{t}{r}$ many $\alpha_S$ we will neglect (using~\eqref{eq:bound_alpha_cK}) less than  
 \begin{equation}\label{eq:sparse_S}
\binom{t}{r}(\delta/T^r)\lceil n/t\rceil^2\le \delta n^2
\end{equation} edge-disjoint copies from $\cK'$. 
 %We proceed as in the proof of Corollary~6 in~\cite{LO-YP}. 
  For each $ij\in E(R)$, we split the edges within each $\eps$-regular pair $(V_i,V_j)$ randomly with probabilities $\alpha_S/d_{ij}$, where $i,j\in S\in\binom{V(R)}{r}$, as follows. 
  
  For every edge $e\in E(V_i,V_j)$ we consider the random variable $Y_{ij,e}$, which takes values in $\{S\colon S\in\binom{V({R})}{r}, i,j\in S\}$ with 
probability $\PP(Y_{ij,e}=S)=\alpha_S/d_{ij}$ (and possibly some arbitrary other value with probability $1-\sum_{S\in\binom{V({R})}{r}, i,j\in S} \alpha_S/d_{ij}$). Thus, for a fixed $S\in\binom{V(R)}{r}$, the $2$-set $\{i,j\}\subseteq S$ and an  edge $e\in E(V_i,V_j)$,  the indicator random variable $Z_{S,ij,e}:=1_{\{Y_{ij,e}=S\}}$ is  a Bernoulli variable with parameter $\alpha_S/d_{ij}$. In particular, for fixed $S$, the random variables $Z_{S,ij,e}$ are (mutually) independent.

In this way we obtain, for every $S$, a random $r$-partite subgraph $G_S\subseteq G[\cup_{i\in S} V_i]$ where $\PP(e\in G_S)=\alpha_S/d_{ij}$ for every $e\in E(V_i,V_j)$ for some $i,j\in S$. For given $S\in\binom{V({R})}{r}$ with $\alpha_S\ge\delta/T^r$ and a $2$-set $\{i,j\}\subseteq S$,  we let $X_{ij}$ be the random variable which counts the number of edges from $E(V_i,V_j)$ chosen to be in $G_S$.  
Observe that the $Y_{ij,e}$'s make sure that the graphs $G_S$ are edge-disjoint for different sets $S$. 
Since $X_{ij}$ is the sum of $|E(V_i,V_j)|=d_{ij}|V_i||V_j|$ independent indicator random variables, which are distributed $\mathrm{Be}(\alpha_S/d_{ij})$ we have $\EE(X_{ij})=\alpha_S|V_i||V_j|\ge (\delta/T^r) \lfloor n/t\rfloor^2$, and, by Chernoff's inequality (Theorem~\ref{thm:Chernoff}):
\begin{equation}\label{eq:Chernoff}
 \PP [|X_{ij}  -\EE(X_{ij}) |\ge \xi \EE(X_{ij})] \le 2\exp\left(-\xi^2 \delta n^2/(4t^2T^r)\right).
\end{equation}
Thus, with probability at least $1-2\binom{r}{2}\binom{T}{r}\exp\left(-\xi^2 \delta n^2/(4t^2T^r)\right)=1-o(1)$,  
for every $S$ with $\alpha_S\ge\delta/T^r$ and for all $i\neq j\in S$, we have that $|E_{G_S}(V_i,V_j)|=(1\pm\xi)\alpha_S|V_i||V_j|$ and  the density of every pair $(V_i,V_j)$ in $G_S$ is  thus $(1\pm \xi)\alpha_S$. 
%For $\alpha_S\ge\delta/T^r$ we infer, by Chernoff's inequality~\cite{Ch52}, that with probability at least $1-\exp(- \Theta(n^2))$, the density of every pair $(V_i,V_j)$ in $G_S$ is $\alpha_S\pm o(1)$ for all $i\neq j\in S$. 

\textbf{\emph{Putting everything together.}}
To conclude the lemma we need the following claim, whose proof we postpone first.  
\begin{claim}\label{claim:PS}
With probability $1-o(1)$, we have that for every $S\in\binom{V({R})}{r}$, where $\alpha_S\ge\delta/T^r$,  the graph $G_S$ contains at least $(1-3\xi)\alpha_S (n/t)^2$ edge-disjoint rainbow copies of $K_r$.%, leaving at most $2\xi \binom{r}{2}(n/t)^2$ edges uncovered. 
\end{claim}

Since for every $S\in\binom{V({R})}{r}$  the graphs $G_S$ are edge-disjoint, we find at least
\[
\sum_{S\colon \alpha_S\ge \delta/T^r} (1-3\xi)\alpha_S (n/t)^2
\overset{\eqref{eq:bound_alpha_cK}}{\ge} 
(1-4\xi)\sum_{S\colon \alpha_S\ge \delta/T^r} |\cK'(S)|
\ge (1-4\xi)(|\cK'|-\delta n^2)
\]
edge-disjoint rainbow copies of $K_r$. The last inequality 
follows since at most $\delta n^2$ edges lie in $G_S$ with small 
$\alpha_S<\delta/T^r$, cf.~\eqref{eq:sparse_S}.

Thus, we have $\nr(G,K_r)\ge (1-4\xi)(|\cK|-\delta n^2)-\delta n^2\ge |\cK|-3\delta n^2$. 
It follows that
\[
N(G,K_r)\le \nr(G,K_r)+3\delta n^2\le\nr(G,K_r)+c n^2.
\]
%The desired result holds for $c=5\delta$.
\end{proof}

\begin{proof}[Proof of Claim~\ref{claim:PS}]
We fix some $S\in\binom{V({R})}{r}$ with $\alpha_S\ge \delta/T^r$. 
We define the auxiliary $\binom{r}{2}$-uniform hypergraph $\ccH:=\ccH_S$ with the vertex set $V(\ccH_S)=E(G_S)$ as 
follows. Its hyperedges correspond to the edge-sets of all rainbow copies of $K_r$ in $G_S$. The number of edges of $G_S$ and 
thus the vertices of 
$\ccH$ is $(1\pm2\xi)\binom{r}{2}\alpha_S(n/t)^2$ with probability 
$1-2r^2\exp\left(-\xi^2 \delta n^2/(4t^2T^r)\right)=1-o(1)$, see the application of Chernoff's inequality~\eqref{eq:Chernoff}. 

Next we estimate the number of hyperedges of $\ccH$ and related quantities. By Lemma~\ref{lem:most_rainbow}, 
for $i\neq j\in S$,
 all but at most $4r\eps\lceil n/t\rceil^2$ edges from $(V_i,V_j)$ lie in 
 \begin{equation*}
(1\pm\gamma)\left(\prod_{\substack{\{k,\ell\}\in S\\
\{k,\ell\}\neq\{i,j\}}} d_{k\ell}\right)\prod_{a\in S,a\neq i,j}|V_a|
\end{equation*}
rainbow copies of $K_r$ between the sets $V_\ell$, $\ell\in S$. We refer to these edges as good. The remaining at most $4r\eps\lceil n/t\rceil^2$ \emph{bad} edges from $(V_i,V_j)$ lie in at most $\lceil n/t\rceil^{r-2}$ rainbow copies of $K_r$.  

We may now apply Lemma~\ref{lem:sparsify} (with $p_{ij}=\alpha_S/d_{ij}$ for all $i\neq j\in S$, 
$\gamma_{L\ref{lem:sparsify}}=1/2$ and $\eta=\gamma$ obtaining the parameter $\beta$) to these (rainbow) copies to conclude that, with probability $1-n^2e^{-\lfloor n/t\rfloor^{\beta}}$, each good edge closes in $G_S$%\begin{multline*}
\[
(1\pm\gamma)\prod_{\substack{\{k,\ell\}\in S\\
\{k,\ell\}\neq\{i,j\}}} \frac{\alpha_S}{d_{k\ell}}
(1\pm\gamma)
\left(\prod_{\substack{\{k,\ell\}\in S\\
\{k,\ell\}\neq\{i,j\}}} d_{k\ell}\right)\prod_{a\in S, a\neq i,j}|V_a|=
(1\pm2\gamma)\alpha_S^{\binom{r}{2}-1}(n/t)^{r-2}
%\end{multline*}
\]
rainbow copies (which are thus hyperedges in $\ccH$). 

Recall that every bad edge from $E(V_i,V_j)$ is included in $G_S$ with probability $\alpha_S/d_{ij}$ independently. 
 We denote by $B_{ij}$ the number of bad edges from $E(V_i,V_j)$ that are included in $G_S$. An application of Chernoff's inequality guarantees that 
\[
 \PP [B_{ij} \ge  7(\alpha_S/d_{ij})4r\eps\lceil n/t\rceil^2] \le \exp\left(-(\alpha_S/d_{ij})4r\eps\lceil n/t\rceil^2\right),
\]
where we used $\EE(B_{ij})\le (\alpha_S/d_{ij})4r\eps\lceil n/t\rceil^2$. This implies that the above estimate holds for each $\alpha_S\ge \delta/T^r$ and every $\{i,j\}\in \binom{S}{2}$ with probability at most $\binom{r}{2}\binom{T}{r}\exp\left(-(\alpha_S/\delta)4r\eps\lceil n/t\rceil^2\right)$.

Further we need to bound the number of rainbow copies that a bad edge from $G_S$ closes. Now we apply Lemma~\ref{lem:sparsify} (with $p_{ij}=\alpha_S/d_{ij}$ for all $i\neq j\in S$, 
$\gamma_{L\ref{lem:sparsify}}=1/2$ and $\eta=1/2$ obtaining the parameter $\beta'$) to the at most $\lceil n/t\rceil ^{r-2}$ rainbow copies (we can extend these to exactly this number $\lceil n/t\rceil ^{r-2}$ by adding some arbitrary copies of $K_r$).  We  conclude that, with probability $1-n^2e^{-\lfloor n/t\rfloor^{\beta'}}$, each bad edge closes in $G_S$ at most
\[
\frac{3}{2}\left(\prod_{\substack{\{k,\ell\}\in S\\
\{k,\ell\}\neq\{i,j\}}} \frac{\alpha_S}{d_{k\ell}}\right) \lceil n/t\rceil ^{r-2}
\le
2(\alpha_S/\delta)^{\binom{r}{2}-1}(n/t)^{r-2}
%\end{multline*}
\]
rainbow copies (which are thus hyperedges in $\ccH$).  

To summarize: with probability at least
\begin{multline*}
1-r^2T^r\exp\left(-\xi^2 \delta n^2/(4t^2T^r)\right)- r^2T^r\exp\left(-(\alpha_S/\delta)4r\eps\lceil n/t\rceil^2\right)\\
-n^2 T^re^{-\lfloor n/t\rfloor^{\beta}}-n^2 T^re^{-\lfloor n/t\rfloor^{\beta'}}=1-o(1),
\end{multline*}
we have the following for every $S$ with $\alpha_S\ge \delta/T^r$:
\begin{itemize}
\item the number of edges of $G_S$ is
 $(1\pm2\xi)\binom{r}{2}\alpha_S(n/t)^2$, and
\item each good edge (for some pair $(V_i,V_j)$ with $d_{ij}\ge \delta$) closes in $G_S$ 
\[
(1\pm2\gamma)\alpha_S^{\binom{r}{2}-1}(n/t)^{r-2}
\] rainbow copies of $K_r$, and 
\item the number of bad edges in $G_S$ is at most 
\[7\binom{r}{2}(\alpha_S/d_{ij})4r\eps\lceil n/t\rceil^2\le 7\binom{r}{2}(\alpha_S/\delta)4r\eps\lceil n/t\rceil^2,\]
and
\item each bad edge closes in $G_S$ at most
$2(\alpha_S/\delta)^{\binom{r}{2}-1}(n/t)^{r-2}$ rainbow copies of $K_r$.
\end{itemize}

 This means that all but at most $\binom{r}{2}(\alpha_S/\delta)4r\eps\lceil n/t\rceil^2\le c_1 |V(\ccH)|$ vertices  $e\in E(G_S)=V(\ccH)$ have degree in the interval 
\[
(1\pm2\gamma)\alpha_S^{\binom{r}{2}-1}(n/t)^{r-2}.
\]
Thus,  if we set $D:=\alpha_S^{\binom{r}{2}-1}(n/t)^{r-2}$ then it is larger 
 than $d_0\left(\binom{r}{2},c_0, k\right)$ since $r\ge 3$. On the other hand,
 the degree of every vertex of $\ccH$ is at most $2\delta^{-\binom{r}{2}+1}D$.
Furthermore,   any two edges in $E(G_S)$ lie in at most $\lceil n/t\rceil^{r-3}=o(D)$ hyperedges. 
Thus, the assumptions of 
  Theorem~\ref{PS-matching}  are verified and hence  $\ccH$ has a 
matching with at least 
\[
(1-c_0)e(G_S)/\binom{r}{2}\ge (1-c_0)(1-2\xi)\alpha_S (n/t)^2\ge (1-3\xi)\alpha_S (n/t)^2
\]
 hyperedges. 
%This means that at most $c_0 e(G_S)\le 2c_0 \binom{r}{2}\alpha_S(n/t)^2$ edges remain uncovered.

We conclude that,  with probability $1-o(1)$, 
every graph $G_S$ (with $\alpha_S\ge \delta/T^r$) contains at least $(1-3\xi)\alpha_S (n/t)^2$ edge-disjoint rainbow copies of $K_r$. 
\end{proof}

\subsection{Proof of Theorem~\ref{thm:stab_phi_rainbow}}
\begin{proof}[Proof of Theorem~\ref{thm:stab_phi_rainbow}]
For given $r\ge 3$ and $\gamma>0$ let $\eps'=\eps_{T\ref{thm:stab_phi_cliques}}$ be as asserted by Proposition~\ref{thm:stab_phi_cliques}. 
We choose $c:=\eps'/e(K_r)$ and assume that $n$ is large enough so that Lemma~\ref{lem:almost_edge_rainbow} is 
applicable. Finally we set $\eps:=\eps'/e(K_r)$. 

Let now $G$ be a graph on $n$ vertices with a proper coloring $\chi$ of its edges such that $\pr(G,K_r)\ge\ex(n,K_r)-\eps n^2$ holds. 
By Lemma~\ref{lem:almost_edge_rainbow}, we have $N(G,K_r)\le \nr(G,K_r)+c n^2$ (for $n$ large enough). Therefore, 
$\pr(G,K_r)=e(G)-(e(K_r)-1)\nr(G,K_r)\ge\ex(n,K_r)-\eps n^2$  implies 
\begin{multline*}
\phi(G,K_r)=e(G)-(e(K_r)-1)N(G,K_r)\ge e(G)-(e(K_r)-1)(\nr(G,K_r)+c n^2)\ge\\
\pr(G,K_r)-(e(K_r)-1)cn^2
\ge
\ex(n,K_r)-\eps n^2-(e(K_r)-1)cn^2=\\ \ex(n,K_r)-e(K_r)cn^2=\ex(n,K_r)-\eps'n^2,
\end{multline*}
 and Proposition~\ref{thm:stab_phi_cliques} yields the desired partition.
\end{proof}

\section{Proof of Theorem~\ref{thm:main-rainbow}}\label{sec:exact}
In the following, we conclude with the proof of Theorem~\ref{thm:main-rainbow}, 
which follows by using the same steps (Claims 7-9) of the proof 
of the main result from~\cite[Theorem~3]{LO-YP} except a minor modification at the end which we are going to describe below. 
Although the main theorem in~\cite[Theorem~3]{LO-YP} did not consider the case $H=K_r$, 
the steps of the proof work verbatim for this case as well.   
\begin{proof}[Sketch of the proof of Theorem~\ref{thm:main-rainbow}]
We will apply Theorem~\ref{thm:stab_phi_rainbow} %and~\ref{lem:stability} 
in the  form when $\eps=0$, and 
we choose $\gamma$ sufficiently small. We assume that there is a graph $G$ on $n$ vertices ($n$ large enough) such that there exists a proper edge coloring of $G$ with $\pr(G,K_r)\ge \ex(n,K_r)$  and $G$ is not isomorphic to the Tur\'an graph $T_{r-1}(n)$. 
By following the steps of~\cite{LO-YP}, we apply first Claim 7 from~\cite{LO-YP} and 
assume that $\pr(G) = \ex(n,K_r) + m$ for some $m\geq 0$.  
We obtain a subgraph $G'$ of $G$ on $n'$ vertices such that 
$\delta(G')\ge\delta(T_{r-1}(n'))$ and $\pr(G',K_r)\ge  \ex(n',K_r)+m$.  
%where $i\in[\binom{n_0}{2}]\cup\{0\}$. 
Then, Theorem~\ref{thm:stab_phi_rainbow}  asserts the existence of a partition of $G'$ into $r-1$ parts that maximizes 
the number of edges between different parts so that the number $m_2$ of edges  within the partition classes is not zero but  is at most $\gamma n'^2$. 
It is observed that the order of these $r-1$ parts are almost balanced due 
to the maximality condition (Claim~8 from~\cite[Theorem~3]{LO-YP}).  This implies that $e(G')\le\ex(n',K_r)+m_2$. 

In the final step, we find at least $\left\lfloor\frac{m_2}{\binom{r}{2}-1}\right \rfloor+1$  many rainbow copies of $K_r$.  
We find these copies iteratively (Claim~9 from~\cite[Theorem~3]{LO-YP}). 
The only difference to  the embedding in~\cite{LO-YP} is that we need to find at each iteration 
a copy of $K_r$, which under the edge coloring $\chi$ is rainbow. This is done by finding first a complete $(r-1)$-partite graph $F$ with parts of size at least $8(r-1)^3$, so that one of the parts contains an edge (Claim~9 from~\cite[Theorem~3]{LO-YP}, this basically follows from an application of Theorem of Erd\H{o}s and Stone~\cite{ES46}). A rainbow copy of $K_r$ is guaranteed to exist in such a graph $F$ by a result of 
Keevash, Mubayi, Sudakov, and  Verstra{\"e}te~\cite[Lemma 2.2]{KMSV07}. This allows us to 
 find at least $\left\lfloor\frac{m_2}{\binom{r}{2}-1}\right \rfloor+1$  many rainbow copies of $K_r$ in $G'$ under the edge coloring $\chi$. Clearly, this implies that 
 \[
 \pr(G',K_r)\le  \ex(n',K_r)+m_2-\left(\binom{r}{2}-1\right)\left(\left\lfloor\frac{m_2}{\binom{r}{2}-1}\right \rfloor+1\right)<\ex(n,K_r),
 \]
 a contradiction.
\end{proof}

\section{Concluding remarks}\label{sec:remarks}
With essentially the same techniques, Theorem~\ref{thm:main-rainbow} can be generalized to any 
edge-critical graph of chromatic number at least $3$ as follows.
\begin{theorem}\label{thm:edge_critical}
For any edge-critical graph $H$ with chromatic number at least 3, 
there is an $n_0=n_0(H)$ such that $\prn(n,H)=\ex(n,H)$ for all $n\ge n_0$. 
Moreover, the only graph attaining $\prn(n,H)$ is the Tur\'an graph $T_{\chi(H)-1}(n)$.
\end{theorem}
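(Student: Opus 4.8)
The plan is to follow the proof of Theorem~\ref{thm:main-rainbow} essentially verbatim, replacing the clique $K_r$ by $H$ throughout (write $r:=\chi(H)\ge 3$) and replacing the clique-specific exact argument by the edge-critical argument of~\cite[Theorem~3]{LO-YP}. Since the case $H=K_r$ is exactly Theorem~\ref{thm:main-rainbow}, one may assume $H\neq K_r$. As for cliques, the proof decomposes into a rainbow stability statement for $\pr(G,H)$ and an exact step built on it.

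For the stability step I would first establish the analogue of Theorem~\ref{thm:stab_phi_rainbow}: if $G$ on $n$ vertices carries a proper colouring $\chi$ with $\pr(G,H)\ge\ex(n,H)-\eps n^2$, then $V(G)$ partitions into $r-1$ classes with fewer than $\gamma n^2$ edges inside. Exactly as in Section~\ref{sec:stability}, the whole step reduces to the analogue of Lemma~\ref{lem:almost_edge_rainbow}, namely that \emph{in any proper edge colouring} one has $N(G,H)\le\nr(G,H)+cn^2$: granting this, the identities $\pr(G,H)=e(G)-(e(H)-1)\nr(G,H)$ and $\phi(G,H)=e(G)-(e(H)-1)N(G,H)$ force $\phi(G,H)\ge\ex(n,H)-\eps'n^2$, and then Theorem~\ref{lem:stability} (which covers all $H\neq K_r$ with $\chi(H)\ge 3$) produces the partition. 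To prove $N(G,H)\le\nr(G,H)+o(n^2)$ I would rerun the proof of Lemma~\ref{lem:almost_edge_rainbow} with $H$ for $K_r$: fix a maximum family $\cK$ of edge-disjoint copies of $H$, apply the regularity lemma, and note that all but $o(n^2)$ members of $\cK$ have every edge in an $\eps$-regular pair of density at least $\delta$. The only structural change is that the ``pattern'' of such a copy is now a homomorphism $\psi\colon H\to R$ into the cluster graph --- equivalently a proper colouring of $V(H)$ by clusters in which \emph{non-adjacent} vertices of $H$ may share a cluster --- rather than an $r$-subset of clusters. One groups the copies of $\cK$ by pattern, defines weights $\alpha_\psi$ exactly as the $\alpha_S$ in~\eqref{eq:def_alpha_S}, randomly splits the relevant $\eps$-regular pairs with these weights, and applies Pippenger--Spencer (Theorem~\ref{PS-matching}) to the $e(H)$-uniform hypergraph of rainbow copies of $H$ realising a fixed pattern; summing over patterns and discarding the $o(n^2)$ copies carried by low-weight patterns (as in~\eqref{eq:sparse_S}) gives $\nr(G,H)\ge N(G,H)-o(n^2)$.

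The two auxiliary inputs generalise with only notational changes. The analogue of Lemma~\ref{lem:most_rainbow} --- that through most edges of an $\eps$-regular environment, most copies of $H$ are rainbow --- follows from the general-graph counting lemma (the $H$-version of Lemma~\ref{lem:counting}, see~\cite{KoSi96}) together with the same case analysis of a repeated colour: a non-rainbow copy has two colour-equal, hence vertex-disjoint, edges, and since every colour class of $\chi$ is a matching each case contributes only $o(m^{|V(H)|-2})$ copies, negligible against the $\Theta(m^{|V(H)|-2})$ copies of $H$ through a good edge. The analogue of Lemma~\ref{lem:sparsify} follows from the same Kim--Vu application (Theorem~\ref{thm:kimvu}), now with $k=e(H)-1$ and with $E'=O(m^{|V(H)|-3})$, since fixing one further edge of $H$ pins down at least three of its vertices. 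I expect this whole block --- the regularity-plus-absorption proof of $N(G,H)\le\nr(G,H)+o(n^2)$ --- to be the main obstacle: the pattern bookkeeping (proper colourings of $H$ by clusters, with collapses at non-edges) and the careful passage through the general counting lemma and Kim--Vu are noticeably more delicate than in the $K_r$ case, although no new idea beyond Lemmas~\ref{lem:most_rainbow}--\ref{lem:almost_edge_rainbow} is required.

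For the exact step, take $G$ on $n$ vertices with a proper colouring $\chi$ and $\pr(G,H)\ge\ex(n,H)$, and run Claims~7--9 of~\cite[Theorem~3]{LO-YP} verbatim, fed with the rainbow stability above: one obtains a subgraph $G'$ on $n'$ vertices with $\delta(G')\ge\delta(T_{r-1}(n'))$ and an almost-balanced partition of $V(G')$ into $r-1$ classes with $0<m_2\le\gamma n'^2$ interior edges (the case $m_2=0$ forces $G'\cong T_{r-1}(n')$ and then $G\cong T_{r-1}(n)$, exactly as in~\cite{LO-YP}). The only modification is in the embedding of Claim~9: at each iteration one must produce a copy of $H$ that is \emph{rainbow} under $\chi$. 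Since $H$ is edge-critical there is an edge $uv\in E(H)$ and a proper $(r-1)$-colouring of $H-uv$ assigning $u$ and $v$ the same colour; hence $H$ is a subgraph of the graph obtained from a complete $(r-1)$-partite graph with parts of size $|V(H)|$ by adding one edge inside one part. The Erd\H{o}s--Stone step of Claim~9 of~\cite{LO-YP} produces, around an interior edge of $G'$, exactly such a configuration with all parts of size at least $N_0(H)$; for $N_0(H)$ large in terms of $H$ this configuration contains a rainbow copy of $H$ --- either by a short greedy embedding (each new vertex chooses from a part of size $\gg_H 1$, edges at a common vertex are automatically distinctly coloured since $\chi$ is proper, so only the $O_H(1)$ already-used colours must be avoided) or by~\cite[Lemma~2.2]{KMSV07}. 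Iterating yields at least $\lfloor m_2/(e(H)-1)\rfloor+1$ edge-disjoint rainbow copies of $H$ in $G'$, whence
\[
\pr(G',H)\le\ex(n',H)+m_2-(e(H)-1)\Bigl(\bigl\lfloor\tfrac{m_2}{e(H)-1}\bigr\rfloor+1\Bigr)<\ex(n',H)\le\ex(n,H),
\]
contradicting $\pr(G',H)\ge\ex(n',H)$ and completing the proof. All of the exact step beyond this one substitution is a mechanical copy of the arguments already carried out here and in~\cite{LO-YP}.
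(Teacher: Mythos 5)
Your proposal is correct, and its overall architecture --- a rainbow stability theorem for $\pr(G,H)$ followed by the exact step via Claims~7--9 of~\cite{LO-YP} with a rainbow embedding at the final iteration --- is exactly the paper's. The one genuine divergence is in how the stability input (Theorem~\ref{thm:general_stability}) is obtained. You generalize the paper's own clique argument: prove $N(G,H)\le\nr(G,H)+o(n^2)$ by rerunning Lemma~\ref{lem:almost_edge_rainbow} with patterns $\psi\colon H\to R$ in place of $r$-sets $S$, then feed the resulting lower bound on $\phi(G,H)$ into Theorem~\ref{lem:stability}. The paper acknowledges that this route works but actually sketches a different one: it reruns~\cite[Lemma~4]{LO-YP}, which rests on the Pikhurko--Sousa weighted-decomposition theorem, and only modifies the auxiliary $e(H)$-uniform hypergraph of~\cite[Corollary~6]{LO-YP} so that its hyperedges are \emph{rainbow} copies of $H$ (justified by the analogue of Lemma~\ref{lem:most_rainbow}). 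That route avoids the $\alpha_\psi$ bookkeeping and the Kim--Vu sparsification entirely, but it genuinely does not apply to cliques, which is why the paper needed the longer argument of Section~\ref{sec:stability} for $K_r$; your route is more self-contained relative to this paper. It does carry one extra burden you should make explicit: for $H\neq K_r$ a copy of $H$ from $\cK'$ may place two \emph{non-adjacent} vertices in the same cluster without using any intra-cluster edge, so such copies are not discarded by the $\sum_i\binom{|V_i|}{2}$ term as in the clique case; you must either handle non-injective patterns throughout (counting lemma, random splitting, and the codegree condition of Theorem~\ref{PS-matching}) or first refine each cluster into $|V(H)|$ random sub-clusters to reduce to injective patterns. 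Finally, your greedy construction of a rainbow copy of $H$ in the exact step (replacing~\cite[Lemma~2.2]{KMSV07}, which is stated for cliques) is the right adaptation: properness of $\chi$ makes the edges at each newly embedded vertex automatically distinct, so only the $O_H(1)$ previously used colours must be avoided, each excluding at most one candidate vertex per embedded neighbour.
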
  
To do so one can prove the following stability result about $\pr(n,H)$.
 \begin{theorem}\label{thm:general_stability}
 For every $H$ with $\chi(H)=r\ge 3$ and for every $\gamma>0$ there exist $\eps>0$ and $n_0\in\Nat$ such that for every graph $G$ on $n\ge n_0$ vertices with a proper coloring $\chi$ of its edges the following is true. If 
\[
\pr(G,H)\ge\ex(n,H)-\eps n^2 
\]
then there exists a partition of $V(G)=V_1\dcup\ldots\dcup V_{r-1}$ with $\sum_{i=1}^{r-1}e(V_i)<\gamma n^2$.
\end{theorem}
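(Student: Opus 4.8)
The plan is to mimic the proof of Theorem~\ref{thm:stab_phi_rainbow}, with the clique $K_r$ replaced throughout by $H$. First I would reduce the statement to an $H$-analogue of Lemma~\ref{lem:almost_edge_rainbow}, namely: \emph{for every $c>0$ there is an $n_0$ such that for every proper edge coloring $\chi$ of every graph $G$ on $n\ge n_0$ vertices one has $N(G,H)\le\nr(G,H)+cn^2$.} Granting this, if $\pr(G,H)\ge\ex(n,H)-\eps n^2$ then, using $\pr(G,H)=e(G)-(e(H)-1)\nr(G,H)$ and $\nr(G,H)\ge N(G,H)-cn^2$, we obtain $\phi(G,H)=e(G)-(e(H)-1)N(G,H)\ge\pr(G,H)-(e(H)-1)cn^2\ge\ex(n,H)-\eps n^2-(e(H)-1)cn^2$. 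Choosing $c$ and $\eps$ small enough with respect to the $\eps$ supplied by Theorem~\ref{lem:stability} on input $H$ and $\gamma$ (or, when $H=K_r$, by Proposition~\ref{thm:stab_phi_cliques}), the required partition of $V(G)$ is then produced by Theorem~\ref{lem:stability} (respectively Proposition~\ref{thm:stab_phi_cliques}).

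For the $H$-analogue of Lemma~\ref{lem:almost_edge_rainbow} I would fix a family $\cK$ of $N(G,H)$ edge-disjoint (not necessarily rainbow) copies of $H$ and apply the regularity lemma (Theorem~\ref{thm:reglemma}) to $G$, forming the cluster graph $R$ whose edges are the dense $\eps$-regular pairs. As the copies in $\cK$ are edge-disjoint, all but $O(\delta n^2)$ of them --- call this subfamily $\cK'$ --- have every edge inside a dense $\eps$-regular pair, so that any two vertices adjacent in such a copy are mapped to distinct clusters forming an edge of $R$; in particular each $F\in\cK'$ projects to a homomorphic image of $H$ in $R$ collapsing no edge of $H$. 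I would classify the copies of $\cK'$ by their \emph{type}, i.e.\ by the pair consisting of the support $\supp(V(F))\subseteq V(R)$ and the partition of $V(H)$ recording in which cluster each vertex of $F$ lies; there are only boundedly many types, so it suffices to treat the types carrying a non-negligible number of copies (the remaining ones accounting, as in~\eqref{eq:sparse_S}, for $O(\delta n^2)$ copies from $\cK'$). For each such type I would introduce weights --- one per pair of clusters used by the type, so chosen that over all types and all edges of $H$ mapped to a fixed pair $(V_i,V_j)$ the weights sum to at most $d_{ij}$ --- prescribing the density to which each relevant $\eps$-regular pair is to be randomly sparsified; after the split (with the densities controlled via Chernoff's inequality, Theorem~\ref{thm:Chernoff}) I would extract in each resulting sparse type-graph (the analogue of the graphs $G_S$) a near-perfect matching of edge-disjoint rainbow copies of $H$ by the Pippenger--Spencer theorem (Theorem~\ref{PS-matching}) applied to the $e(H)$-uniform ``rainbow-$H$'' hypergraph, exactly as in the proof of Claim~\ref{claim:PS}.

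Two ingredients must be re-proved for $H$. The first is the $H$-analogue of Lemma~\ref{lem:most_rainbow}: in any proper edge coloring, all but $o(n^{|V(H)|-2})$ of the copies of $H$ through a fixed edge of a dense regular $|V(H)|$-partite structure are rainbow. This follows, just as Lemma~\ref{lem:most_rainbow} does, by combining the general counting (embedding) lemma for $H$ (the paper's Lemma~\ref{lem:counting} being the $K_r$ case; see~\cite{KoSi96}) with a union bound over the $O(1)$ pairs of edges of $H$, using that every color class of $\chi$ is a matching to bound the number of non-rainbow copies through the fixed edge by $o(n^{|V(H)|-2})$ against the $\Theta(n^{|V(H)|-2})$ copies through it. The second is the $H$-analogue of Lemma~\ref{lem:sparsify}: the Kim--Vu argument (Theorem~\ref{thm:kimvu}) is run with polynomial degree $k=e(H)-1$ in place of $\binom{r}{2}-1$, and the estimates for $\EE(X)$ and for the partial expectations $E'$ carry over verbatim. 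With these in hand, the degree, codegree and matching estimates of Claim~\ref{claim:PS} go through with $\binom{r}{2}$ replaced by $e(H)$ and the exponent $r-2$ replaced by $|V(H)|-2$.

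The hard part, and the only genuinely new point, is the bookkeeping: the copies of $H$ no longer sit on copies of $K_r$ in $R$ but on a bounded family of homomorphic-image types, and a single $\eps$-regular pair $(V_i,V_j)$ may be shared by several types and, within a type, by several edges of $H$. Choosing the weights so that (i) the total demand on each pair $(V_i,V_j)$ stays below $d_{ij}$ (so the random split is well defined and the type-graphs stay pairwise edge-disjoint) while (ii) for each type and each edge of $H$ the surviving density is still a fixed positive fraction of the prescribed weight (so the adapted counting and the Pippenger--Spencer hypotheses still apply) is routine but notationally heavier than the single-family scheme of Lemma~\ref{lem:almost_edge_rainbow}; it is the same weighted-pair accounting already carried out in~\cite{PS07} and~\cite{LO-YP}.
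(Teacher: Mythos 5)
Your route is sound, and it is in fact the one the paper itself declares viable in a single sentence (``Theorem~\ref{thm:general_stability} can be shown by generalizing the proof of Theorem~\ref{thm:stab_phi_rainbow}''), but it is not the route the paper's written sketch actually takes. You reduce everything to an $H$-analogue of Lemma~\ref{lem:almost_edge_rainbow} (namely $N(G,H)\le \nr(G,H)+cn^2$) and then fall back on the uncolored stability statements (Theorem~\ref{lem:stability} for $H\neq K_r$, Proposition~\ref{thm:stab_phi_cliques} for cliques), which forces you to redo the entire regularity/sparsification/Kim--Vu/Pippenger--Spencer machinery for general $H$, including the homomorphism-type bookkeeping that you correctly single out as the only genuinely new point. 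The paper instead generalizes~\cite[Lemma~4]{LO-YP} directly: it runs the uncolored argument of~\cite{LO-YP}, built on the Pikhurko--Sousa weighted decomposition result~\cite{PS07}, essentially unchanged, and only modifies the construction of the auxiliary $e(H)$-uniform hypergraph so that its hyperedges are \emph{rainbow} copies of $H$, justified by a Lemma~\ref{lem:most_rainbow}-type argument that under a proper coloring almost all copies of $H$ in a dense regular environment are rainbow. The trade-offs are clear: the paper's route recycles an existing proof wholesale and avoids comparing $N(G,H)$ with $\nr(G,H)$ altogether, but, as the paper stresses, it does not cover $H=K_r$, which must then be handled separately by Theorem~\ref{thm:stab_phi_rainbow}; your route treats all $H$ (cliques included) uniformly, at the price of re-proving the full main lemma, and the weighted splitting of regular pairs shared by several types and several edges of $H$ --- while standard after~\cite{PS07} and~\cite{LO-YP} --- is the step you would need to write out carefully for the argument to be complete.
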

Theorem~\ref{thm:general_stability} can be shown by generalizing the proof of Theorem~\ref{thm:stab_phi_rainbow}. 
We provide here a very short sketch of a slightly different approach, which generalizes~\cite[Lemma~4]{LO-YP}. We would like to stress however that this does not apply to cliques  though. 
\begin{proof}[Proof sketch of Theorem~\ref{thm:general_stability}]
The idea is again based on the regularity lemma,  on the result of Pikhurko and Sousa~\cite[Theorem~1.1]{PS07} 
and an application of Theorem~\ref{PS-matching}. 

We first fix an arbitrary proper edge coloring $\chi$ of $G$. 
The proof follows along the lines of the argument in~\cite[Lemma~4]{LO-YP}, up to the place when the auxiliary $e(H)$-uniform  hypergraph is built (which is done in~\cite[Corollary~6]{LO-YP}). The only difference is that our hypergraph consists now of rainbow copies of $H$ (and these are the most copies of $H$ by an argument similar to Lemma~\ref{lem:most_rainbow}). The remainder of the proof stays the same.
\end{proof}
 
As a direct consequence of Theorem~\ref{thm:general_stability} we obtain the following generalization of the result 
of Pikhurko and Sousa~\cite{PS07} 
on $\phi (n, H)$ mentioned above.

\begin{theorem}\label{thm:main-rb-gen}
Let $H$ be any graph of chromatic number at least three. Then we have
\[
\prn(n,H)=\ex(n,H)+o(n^2).
\]
\end{theorem}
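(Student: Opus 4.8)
The plan is to sandwich $\prn(n,H)$ between $\ex(n,H)$ and $\ex(n,H)+o(n^2)$. The lower bound is immediate, and the upper bound is a short deduction from the rainbow stability result, Theorem~\ref{thm:general_stability}, using only the elementary fact that, since $\chi(H)=r$, any $(r-1)$-partite subgraph of a graph on $n$ vertices is $H$-free and hence has at most $\ex(n,H)$ edges.

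For the lower bound I would take $G_0$ to be an extremal $H$-free graph on $n$ vertices, so that $e(G_0)=\ex(n,H)$. Since $G_0$ contains no copy of $H$, it contains no rainbow copy of $H$ under any proper edge coloring $\chi$; thus $\nr(G_0,H)=0$ and $\pr(G_0,H)=e(G_0)-(e(H)-1)\nr(G_0,H)=\ex(n,H)$. Maximising over $G$ and over $\chi$ gives $\prn(n,H)\ge\ex(n,H)$ for every $n$.

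For the upper bound, fix $\gamma>0$ and let $\eps=\eps(\gamma,H)>0$ and $n_0$ be the constants supplied by Theorem~\ref{thm:general_stability}. Let $G$ be an arbitrary graph on $n\ge n_0$ vertices carrying an arbitrary proper edge coloring $\chi$. If $\pr(G,H)<\ex(n,H)-\eps n^2$ there is nothing to show. Otherwise Theorem~\ref{thm:general_stability} provides a partition $V(G)=V_1\dcup\ldots\dcup V_{r-1}$ with $\sum_{i=1}^{r-1}e_G(V_i)<\gamma n^2$. The edges of $G$ that cross this partition span an $(r-1)$-partite subgraph, which is $H$-free because $\chi(H)=r$, and therefore there are at most $\ex(n,H)$ of them; equivalently, such a subgraph is contained in the complete $(r-1)$-partite graph, which has at most $e(T_{r-1}(n))=\ex(n,K_r)\le\ex(n,H)$ edges. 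Hence $e(G)\le\ex(n,H)+\gamma n^2$, and consequently $\pr(G,H)\le e(G)\le\ex(n,H)+\gamma n^2$. In both cases $\pr(G,H)\le\ex(n,H)+\gamma n^2$, so $\prn(n,H)\le\ex(n,H)+\gamma n^2$ whenever $n\ge n_0(\gamma)$; since $\gamma>0$ was arbitrary this reads $\prn(n,H)\le\ex(n,H)+o(n^2)$. Combined with the lower bound this proves the theorem.

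In this form the argument is entirely routine: the genuine obstacle is the proof of Theorem~\ref{thm:general_stability} itself, which I would obtain either by transporting the regularity-lemma argument used for Theorem~\ref{thm:stab_phi_rainbow} to general $H$, or by following~\cite[Lemma~4]{LO-YP} via the weighted decomposition theorem of Pikhurko and Sousa~\cite[Theorem~1.1]{PS07} together with the Pippenger--Spencer matching theorem (Theorem~\ref{PS-matching}); the only new point in either route is that the auxiliary $e(H)$-uniform hypergraph must be built from \emph{rainbow} copies of $H$, which is harmless because a proper edge coloring leaves ``most'' of the copies of $H$ sitting on a given edge rainbow, exactly as in Lemma~\ref{lem:most_rainbow}.
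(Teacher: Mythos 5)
Your proposal is correct and follows essentially the same route as the paper: the lower bound comes from properly coloring an $H$-extremal graph (where $\nr=0$), and the upper bound is deduced from the rainbow stability result, Theorem~\ref{thm:general_stability}, by noting that the crossing edges of the resulting $(r-1)$-partition form an $H$-free (indeed $(r-1)$-partite) graph, so $e(G)\le\ex(n,H)+o(n^2)$. Your version merely makes the $\eps$--$\gamma$ bookkeeping explicit, which the paper leaves implicit.
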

\begin{proof}
The lower bound follows by taking any proper coloring of any $H$-extremal graph on $n$ vertices.
 The upper bound follows since \emph{any} proper  edge coloring  $\chi$ of any graph $G$ on $n$ vertices with 
 $\pr(G,H)\ge\ex(n,H)$ admits  a partition of $V(G)=V_1\dcup\ldots\dcup V_{r-1}$ with $\sum_{i=1}^{r-1}e(V_i)=o(n^2)$, implying $e(G)\le \ex(n,H)+o(n^2)$ and thus $\prn(n,H)= \ex(n,H)+o(n^2)$.
\end{proof}
Finally, we would like to mention that Theorems~\ref{thm:edge_critical} and~\ref{thm:main-rb-gen} can be stated in a slightly general form, similar to the one of Theorem~\ref{thm:main-rainbow}.

\bibliographystyle{amsplain}
\bibliography{decomps}

\end{document}